\newcommand\R{{\mathbb{R}}}
\newcommand\C{{\mathbb{C}}}
\newcommand\Z{{\mathbf{Z}}}
\renewcommand\P{{\mathbf{P}}}
\newcommand\E{{\mathbf{E}}}
\renewcommand\Im{{\operatorname{Im}}}
\renewcommand\Re{{\operatorname{Re}}}
\newcommand\trace{\operatorname{trace}}
\newcommand\tr{\operatorname{trace}}
\newcommand\sgn{\operatorname{sgn}}
\newcommand\condone{{{\bf C1}}}
\newcommand\CE{{\mathcal E}}
\newcommand\ep{{\epsilon}}
\subjclass{15A52}
\theoremstyle{plain}
  \newtheorem{theorem}{Theorem}
  \newtheorem{proposition}[theorem]{Proposition}
  \newtheorem{lemma}[theorem]{Lemma}
  \newtheorem{corollary}[theorem]{Corollary}
\theoremstyle{definition}
  \newtheorem{definition}[theorem]{Definition}
  \newtheorem{example}[theorem]{Example}
  \newtheorem{remark}[theorem]{Remark}
\begin{document}

\title{A central limit theorem for the determinant of a Wigner matrix}

\author{Terence Tao}
\address{Department of Mathematics, UCLA, Los Angeles CA 90095-1555}
\email{tao@math.ucla.edu}
\thanks{T. Tao is supported by a grant from the MacArthur Foundation, by NSF grant DMS-0649473, and by the NSF Waterman award.}

\author{Van Vu}
\address{Department of Mathematics, Yale Univ., New Haven, CT 06520}
\email{van.vu@yale.edu}
\thanks{V. Vu is supported by research grants DMS-0901216 and AFOSAR-FA-9550-09-1-0167.}

\begin{abstract}  We establish a central limit theorem for the log-determinant 
$\log|\det(M_n)|$ of a Wigner matrix $M_n$, under the assumption of four matching moments with either the GUE or GOE ensemble.  More specifically, we show that this log-determinant is asymptotically distributed like $N( \log \sqrt{n!} - \frac{1}{2} \log n, \frac{1}{2} \log n )_\R$ when one matches moments with GUE, and $N( \log \sqrt{n!} - \frac{1}{4} \log n, \frac{1}{4} \log n )_\R$ when one matches moments with GOE.
\end{abstract}

\maketitle

\setcounter{tocdepth}{2}
\tableofcontents

\section{Introduction}

Random matrix theory is an important subject  in mathematics with applications to various 
areas such as numerical analysis, mathematical physics, statistics, number theory  and computer science, to mention a few. One of the main goals of this theory, by and large,  is to understand the distribution of various interesting functionals of a random matrix that naturally arise from linear algebra.

One of most natural and important matrix functionals is the \emph{determinant}.  As such, the study of determinants of random matrices has a long and rich history. The earlier papers on this study focused on the determinant $\det A_n$ of the non-Hermitian iid model $A_n$, where the entries $\zeta_{ij}$ of the matrix were independent random variables with mean $0$ and variance $1$. The earliest paper we find here
 belongs to  Szekeres and Tur\'an \cite{SzT}, in which they studied an extremal problem. In the 1950s, there were
a series of papers \cite{FT,NRR,Turan,Pre} devoted to the computation of  moments  of fixed orders of 
of the determinant  (see also \cite{Gbook}).  The explicit formula for higher moments get very complicated and in general not available, except in 
cases when the atom variables have some special distribution (see, for instance \cite{Dembo}).  

One can use the estimate for the moments and the Chebyshev inequality to obtain an upper bound on 
the magnitude $|\det A_n|$ of the determinant. However, no lower bound was known for a long time. In particular, Erd\H{o}s asked  whether $\det A_n$ is non-zero with probability tending to one.  In 1967, Koml\'os  \cite{Kom, Kom1}  addressed this question, proving  that  almost surely $|\det A_n |  > 0$ for random Bernoulli matrices (where the atom variables are iid Bernoulli, taking values $\pm 1$ with probability $1/2$). His method also works for 
much more general models.  Following \cite{Kom}, the upper  bound on the probability that 
$\det A_n =0$ has been improved in \cite{KKS, TVdet, TVsing, BVW}. However, these results do not say  much about the value of $|\det A_n |$ itself. 

A few years ago, the authors \cite{TVdet} managed to prove that for Bernoulli random matrices, 
with probability tending to one (as $n$ tends to infinity)
\begin{equation} \label{TVlow}  \sqrt {n !}  \exp( -c  \sqrt { n \log n} ) \le |\det A_n | \le  \sqrt {n!} \omega(n)
\end{equation} for any function $\omega(n)$ tending to infinity with $n$. This shows that almost surely,  $\log |\det A_n |$ is $ (\frac{1}{2} +o(1) )n \log n$, but does not otherwise provide much information on the limiting distribution of the log determinant. For related works concerning other models of random matrices, we refer to \cite{Ro}. 

In \cite{Goodman}, Goodman considered random Gaussian matrices $A_n = (\zeta_{ij})_{1 \leq i,j \leq n}$ where the atom variables $\zeta_{ij}$ are iid standard real Gaussian variables, $\zeta_{ij} \equiv N(0,1)_\R$. He noticed that in this case the square of the determinant can be expressed as the product of independent 
chi-square variables. Therefore, its logarithm is the sum of independent variables and thus one expects
a central limit theorem to hold. In fact, using properties of the chi-square distribution,  it is not hard to prove\footnote{Here and in the sequel, $\rightarrow$ denotes convergence in distribution.}
\begin{equation} \label{Girko} \frac{\log (|\det A_n|)- \frac{1}{2} \log n! + \frac{1}{2} \log n}{\sqrt{\frac{1}{2} \log n}}  \rightarrow N(0,1)_\R, \end{equation}
where $N(0,1)_\R$ denotes the law of the real Gaussian with mean $0$ and variance $1$; see e.g. \cite{rempala} for a proof.  Informally, we may write this law as
\begin{equation}\label{detain}
 |\det A_n| \approx n^{-1/2} \sqrt{n!} \exp( N(0,\frac{1}{2} \log n)_\R ).
\end{equation}
We remark that because the second moment of $\exp(N(0,t)_\R)$ is $e^{2t}$ for any $t>0$, this law is consistent with the second moment identity
\begin{equation}\label{turan}
 \E |\det A_n|^2 = n!,
\end{equation}
for iid matrices (and in particular, for Gaussian matrices) that was first observed by Tur\'an \cite{Turan}, and easily derivable from the Leibniz expansion
\begin{equation}\label{leibniz}
 \det A_n = \sum_{\sigma \in S_n} \sgn(\sigma) \prod_{i=1}^n \zeta_{i \sigma(i)}
\end{equation}
after observing that the terms on the right-hand side are pairwise uncorrelated in the iid case.
 
A similar analysis (but with the real chi distribution replaced by a complex chi distribution) also works for complex Gaussian matrices, in which $\zeta_{ij}$ remain jointly independent but now have the distribution of the complex Gaussian $N(0,1)_\C$ (or equivalently, the real and imaginary parts of $\zeta_{ij}$ are independent and have the distribution of $N(0,\frac{1}{2})_\R$).  In that case, one has a slightly different law
\begin{equation}\label{Girko-2} \frac{\log (|\det A_n|)- \frac{1}{2} \log n! + \frac{1}{4} \log n}{\sqrt{\frac{1}{4} \log n}}  \rightarrow N(0,1)_\R, \end{equation}
or more informally
\begin{equation}\label{detain-2}
 |\det A_n| \approx n^{-1/4} \sqrt{n!} \exp( N(0,\frac{1}{4} \log n)_\R ).
\end{equation}
Again, this remains consistent with \eqref{turan}.
 
We turn now to more general real iid matrices, in which the $\zeta_{ij}$ are jointly independent and real with mean zero and variance one.
In \cite{G2},  Girko stated that \eqref{Girko} holds 
for such random matrices  under the additional assumption
that the fourth moment of the atom variables is $3$.  Twenty years later,  he claimed a  
much stronger result which replaced the above assumption by the assumption that the atom variables have bounded $(4 +\delta)$-th moment \cite{G}. However, there are several points which are  not clear in 
these papers. Recently, Nguyen and the second author \cite{NVdet} gave a  new proof for  \eqref{Girko} under an exponential decay hypothesis on the entries.  Their approach also results in an estimate for the rate of convergence and is easily extended to handle to complex case. 

The analysis of the above random determinants relies crucially on the fact that the rows of the matrix are jointly independent.  This independence no longer holds for Hermitian random matrix models, which makes the analysis of determinants of Hermitian random matrices more challenging. The Hermitian version of Komlos' result \cite{Kom,Kom1} was 
posed as an open question by Weiss in the 1980s and was solved only five years ago \cite{CTV}
and for this purpose the authors needed to introduce the quadratic 
analogue of Littlewood-Offord-Erd\H os theorem. The analogue of \eqref{TVlow} was 
first proved in \cite[Theorem 31]{TVlocal1}, as a corollary\footnote{This theorem requires the atom variable has vanishing third moment, but one can remove this  requirement  using  very recent estimates of Nguyen \cite{nguyen} and Vershynin \cite{vershynin} on the least singular value.}  of the Four Moment theorem. But much as in the situation in the non-Hermitian case, these proofs do not reveal much information about the limiting distribution of the determinant. 

Let us now narrow down our consideration to the following class of random matrices.   
  
\begin{definition}[Wigner matrices]\label{def:Wignermatrix}  Let $n \geq 1$ be an integer. An $n \times n$ \emph{Wigner Hermitian matrix} $M_n$ is defined to be a  random Hermitian $n \times n$ matrix $M_n$ with upper triangular complex entries $\zeta_{ij}$ and diagonal real entries $\zeta_{ii}$ ($1\le i \le n$) jointly independent, with mean zero and variance one for $1 \leq i < j\leq n$, and mean zero and variance $\sigma^2$ for $1 \leq i \leq n$ and some $\sigma^2 > 0 $ independent of $n$.  We refer to the distributions of the $\zeta_{ij}$ as the \emph{atom distributions} of $M_n$.
\end{definition}

We say that the Wigner matrix ensemble \emph{obeys Condition {\condone}} for some constant $C_0$ if one has
\begin{equation*}
\E |\zeta_{ij}|^{C_0} \leq C_1
\end{equation*}
for all $1 \leq i,j \leq n$ and some constant $C_1$ independent of $n$.

\begin{example} The famous \emph{Gaussian Unitary Ensemble} (GUE) is the special case of the Wigner ensemble in which the atom distributions $\zeta_{ij}$ are given by the complex Gaussian $N(0,1)_\C$ for $1 \leq i < j \leq n$ and the real Gaussian $N(0,1)_\R$ for $1 \leq i=j \leq n$, thus in this case $\sigma^2 = 1$.  At the opposite extreme, the \emph{complex Hermitian Bernoulli ensemble} is an example of a discrete Wigner ensemble in which the atom distributions $\zeta_{ij}$ is equal to $\pm \frac{1}{\sqrt{2}} \pm \frac{1}{\sqrt{2}} \sqrt{-1}$ (with independent and uniform Bernoulli signs) for $1 \leq i < j \leq n$, and equal to $\pm 1$ for $1 \leq i=j \leq n$ (so again $\sigma^2=1$).

Another important example is the \emph{Gaussian Orthogonal Ensemble} (GOE) in which the atom distributions $\zeta_{ij}$ are given by $N(0,1)_\R$ for $1 \leq i < j \leq n$ and $N(0,2)_\R$ for $1 \leq i = j \leq n$, thus $\sigma^2=2$ in this case.  Finally, the \emph{symmetric Bernoulli ensemble} is an example in which $\zeta_{ij} \equiv \pm 1$ for all $1 \leq i \leq j \leq n$, with $\sigma^2=1$.

All of the above examples obey Condition \condone\ for arbitrary $C_0$.
\end{example}

We now consider the distribution of the determinant for Wigner matrices.  We first make the  observation that the first and second moments of the determinant are slightly different in the Wigner case than in the iid case:

\begin{theorem}[First and second moment]\label{secthm}  Let $M_n$ be a Wigner matrix.
\begin{itemize}
\item (First moment) When $n$ is odd, then $\E \det M_n = 0$.  When $n$ is even, one has
$$ \E \det M_n = (-1)^{n/2} \frac{n!}{(n/2)! 2^{n/2}}.$$
In particular, by Stirling's formula one has
$$ \E \det M_n = \left(\frac{(-1)^{n/2} 2^{1/4}}{\pi^{1/4}} + o(1)\right) n^{-1/4} \sqrt{n!}.$$
\item (Second moment)  If $M_n$ is drawn from GOE, then has\footnote{See Section \ref{not} for the asymptotic notation we will use, including Vinogradov's notation $\ll$.}
$$ n^{3/2} n! \ll \E |\det M_n|^2 \ll n^{3/2} n!,$$
while if $M_n$ is instead drawn from GUE, then
$$ n^{1/2} n! \ll \E |\det M_n|^2 \ll n^{1/2} n!.$$
\end{itemize}
\end{theorem}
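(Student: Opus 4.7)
For the first moment, I would argue purely combinatorially from Leibniz. Expanding $\det M_n = \sum_{\sigma \in S_n} \sgn(\sigma)\prod_i \zeta_{i\sigma(i)}$ and using that the only dependence among the entries is the Hermitian relation $\zeta_{ji} = \overline{\zeta_{ij}}$, I would trace $\E\prod_i \zeta_{i\sigma(i)}$ through the cycle decomposition of $\sigma$. A fixed point $i$ produces a mean-zero factor $\zeta_{ii}$, killing the expectation; a cycle of length $\ge 3$ involves entries $\zeta_{i_p i_{p+1}}$ no two of which are Hermitian conjugates of each other, so by independence the cyclic product has mean zero. The only surviving permutations are therefore fixed-point-free involutions, which require $n$ to be even, have sign $(-1)^{n/2}$, contribute $\prod \E|\zeta_{ij}|^2 = 1$, and are enumerated by $n!/((n/2)!\,2^{n/2})$. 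Stirling's formula then converts this count into the stated $n^{-1/4}\sqrt{n!}$ asymptotic.

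For the GUE second moment, I would work on the eigenvalue side with density $\propto \prod_{i<j}(\lambda_i-\lambda_j)^2\prod_i e^{-\lambda_i^2/2}$ and write $(\det M_n)^2 = \prod_i \lambda_i^2$.  Andreief's identity expresses $\E\prod_i\lambda_i^2 = \det A$, where $A_{ij} = \langle \phi_i, x^2\phi_j \rangle$ and $\phi_k$ are the orthonormal Hermite functions for the weight $e^{-x^2/2}\,dx$.  The three-term recurrence $x\phi_k = \sqrt{k+1}\,\phi_{k+1} + \sqrt{k}\,\phi_{k-1}$ makes $A$ pentadiagonal, and it decouples by the parity of the index into two tridiagonal blocks; each satisfies a short determinantal recurrence whose solution is a product of consecutive odd integers (a double factorial).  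Evaluating and applying Stirling produces $\E(\det M_n)^2 \asymp n^{1/2}n!$.

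For GOE the density is $\propto\prod_{i<j}|\lambda_i-\lambda_j|\prod_i e^{-\lambda_i^2/4}$, and Andreief must be replaced by de Bruijn's Pfaffian identity: $\E\prod_i\lambda_i^2$ becomes the Pfaffian of a skew-symmetric matrix of Hermite-weighted moments, which can be evaluated via the skew-orthogonal polynomial machinery adapted to the weight $e^{-x^2/4}$.  Stirling then yields the asserted $\asymp n^{3/2}n!$; the extra factor of $n$ over GUE reflects the weaker $\beta = 1$ level repulsion.  The main obstacle I anticipate is the GOE computation, since Pfaffian and skew-orthogonal polynomial identities are appreciably less transparent than their determinantal GUE analogues.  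A robust fallback, sufficient for just the order-of-magnitude bounds required here, is to expand $|\det M_n|^2$ by Leibniz and apply Wick's theorem with the real pairing rule $\E M_{ab}M_{cd} = \delta_{ac}\delta_{bd}+\delta_{ad}\delta_{bc}$, then identify the dominant pairings combinatorially.
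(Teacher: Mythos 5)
Your first-moment argument is the paper's: Leibniz expansion, cycle analysis killing fixed points (mean-zero diagonal) and cycles of length $\ge 3$ (no Hermitian-conjugate pair), leaving fixed-point-free involutions counted by $n!/((n/2)!\,2^{n/2})$ with sign $(-1)^{n/2}$. For the second moment your primary route is genuinely different. The paper never passes to the eigenvalue density: it writes $\E|\det M_n|^2=\sum_{\sigma,\rho}\E I_\sigma\overline{I_\rho}$, determines combinatorially which pairs $(\sigma,\rho)$ survive (cycles of length $\ne 2$ must match up to reversal, and the $2$-cycle supports must coincide), evaluates the surviving expectations as $2^{C_1}3^c$ (or $2^c$ for GUE), and then bounds the resulting sums over $S_n$ above and below by a double-counting argument, obtaining only the order of magnitude $\asymp n^{3/2}n!$ or $\asymp n^{1/2}n!$. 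Your Andreief/Hermite computation for GUE is sound: $\langle\phi_i,x^2\phi_j\rangle$ is pentadiagonal, splits by parity into two tridiagonal blocks, and the tridiagonal recurrence does indeed give the double factorials $(2m-1)!!$ and $(2m+1)!!$, yielding the sharp asymptotic $\E(\det M_n)^2\sim\sqrt{2/\pi}\,n^{1/2}\,n!$. That buys you the exact constant, which the paper deliberately forgoes; the paper points out that precise GUE asymptotics of this kind go back to Br\'ezin and Hikami and opts instead for an elementary, self-contained proof, precisely because (as its closing remark observes) the combinatorial argument extends verbatim to any Wigner ensemble matching GOE or GUE to fourth order off the diagonal and second order on it, whereas the Hermite/Pfaffian route is tied to the Gaussian weight. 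Your GOE plan via de Bruijn and skew-orthogonal polynomials is plausible but left unexecuted; the fallback you sketch, namely Leibniz plus the GOE covariance $\E M_{ab}M_{cd}=\delta_{ac}\delta_{bd}+\delta_{ad}\delta_{bc}$ and identification of the surviving pairings, is in substance exactly the paper's argument, though note that the step you phrase as ``identify the dominant pairings combinatorially'' is where the real work lies and occupies most of the appendix.
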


\begin{proof} See Appendix \ref{second}.  More precise asymptotics for these moments in the GUE case were established by Brezin and Hikami \cite{brezin} (see also \cite{forrester}, \cite{garoni}, \cite{krasovsky}), however we give an elementary and self-contained proof of the above results in the appendix.
\end{proof}

Even in the GUE case, it is highly non-trivial to 
prove an analogue of the central limit theorem \eqref{Girko-2}; this was first achieved in \cite{delannay} via a lengthy computation using the explicit formula for the joint distribution of the eigenvalues. Notice that  the observation 
of Goodman does not apply due to the dependence between the rows and so it is not even clear why a central limit theorem must hold for the log-determinant. 
   
While it does not seem to be possible to express the log-determinant of GUE or GOE as a sum of independent random variables, in this paper we present a way to approximate the log-determinant as a sum of weakly dependent terms, based on\footnote{We would like to thank R. Killip for suggesting 
the use of Trotter's form.}  analysing a tridiagonal form of both GUE and GOE due to Trotter \cite{trotter}. 
Using stochastic calculus and the martingale central limit theorem
(see Section \ref{section:GUE}), we give a new proof of the following result.

\begin{theorem} 
[Central limit theorem for log-determinant of GUE and GOE]\label{clt-gue}  Let $M_n$ be drawn from GUE.  Then
$$ \frac{\log |\det(M_n)| - \frac{1}{2}  \log {n!} + \frac{1}{4} \log n }{\sqrt{\frac{1}{2} \log n}} \rightarrow N(0,1)_\R. $$

Similarly, if $M_n$ is drawn from GOE rather than GUE, one has 
$$ \frac{\log |\det(M_n)| - \frac{1}{2}  \log {n!} + \frac{1}{4} \log n }{\sqrt{\log n}} \rightarrow N(0,1)_\R. $$
\end{theorem}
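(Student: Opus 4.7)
The plan is to use Trotter's tridiagonalization to replace $M_n$ by a tridiagonal matrix $T_n$ with independent entries, and then analyse $\log|\det T_n|$ via a martingale decomposition driven by the resulting one-dimensional recursion. By Trotter's theorem \cite{trotter}, $M_n$ has the same eigenvalues (hence the same determinant) as a real symmetric tridiagonal matrix $T_n$ whose $n$ diagonal entries $a_1,\ldots,a_n$ are iid $N(0,1)_\R$ (for GUE) or iid $N(0,2)_\R$ (for GOE), whose $n-1$ positive off-diagonal entries $b_1,\ldots,b_{n-1}$ are mutually independent with $b_k^2$ a scaled $\chi^2_{\beta(n-k)}$ variable ($\beta=2$ for GUE, $\beta=1$ for GOE), and with the $a$'s and $b$'s independent of each other. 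The leading principal minors $D_k:=\det T_k$ obey the three-term recurrence $D_k=a_k D_{k-1}-b_{k-1}^2 D_{k-2}$ (with $D_0=1$, $D_{-1}=0$), so the ratios $R_k:=D_k/D_{k-1}$ satisfy $R_k=a_k-b_{k-1}^2/R_{k-1}$ and
\begin{equation*}
 \log|\det T_n|=\sum_{k=1}^n\log|R_k|.
\end{equation*}
Multiplying the recurrence by $R_{k-1}$ and telescoping yields the key identity
\begin{equation*}
 2\log|\det T_n|=\log|R_1|+\log|R_n|+\sum_{k=1}^{n-1}\log b_k^2+\sum_{k=2}^n\log\Bigl|1-\tfrac{a_kR_{k-1}}{b_{k-1}^2}\Bigr|.
\end{equation*}

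Using $\E\log\chi^2_{2m}=\psi(m)+\log 2$ together with Stirling, the independent sum $\sum_{k=1}^{n-1}\log b_k^2$ has expectation $\log(n-1)!+O(1)=\log n!-\log n+O(1)$, and since $R_n=\det T_n/\det T_{n-1}$ is typically of size $\sqrt n$ one expects $\E\log|R_n|\approx\frac{1}{2}\log n$; these (together with the negligible $\log|R_1|$ and correction sums) recover the claimed mean $\frac{1}{2}\log n!-\frac{1}{4}\log n$. For the variance, $\mathrm{Var}(\log\chi^2_{2m})=\psi'(m)\sim 1/m$ shows $\mathrm{Var}(\sum\log b_k^2)\sim\log n$ for GUE (twice as much for GOE), contributing $\tfrac14\log n$ (resp.\ $\tfrac12\log n$) to $\mathrm{Var}(\log|\det T_n|)$ after the factor $\tfrac12$ in the telescoping identity; a direct expansion of $\log|1-a_kR_{k-1}/b_{k-1}^2|$ in the fresh Gaussian $a_k$ shows the correction sum contributes an identical amount, so the two combine to produce $\tfrac12\log n$ for GUE and $\log n$ for GOE, matching the theorem.

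The CLT itself will be extracted via the martingale central limit theorem applied to the filtration $\mathcal F_k:=\sigma(a_1,\ldots,a_k,b_1,\ldots,b_{k-1})$, under which $R_k$ is $\mathcal F_k$-measurable and the pair $(a_{k+1},b_k)$ is independent of $\mathcal F_k$. One writes $\log|\det T_n|-\E\log|\det T_n|=\sum_{k=1}^n M_k$ as a sum of martingale differences $M_k:=\E[\log|\det T_n|\mid\mathcal F_k]-\E[\log|\det T_n|\mid\mathcal F_{k-1}]$ and checks (a) convergence in probability of the conditional quadratic variation $\sum_k\E[M_k^2\mid\mathcal F_{k-1}]$ to $\tfrac12\log n$ (GUE) or $\log n$ (GOE), and (b) a conditional Lindeberg condition $\sum_k\E[M_k^2\mathbf 1_{|M_k|>\eps\sqrt{\log n}}\mid\mathcal F_{k-1}]\to 0$. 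Both reduce, upon conditioning on $\mathcal F_{k-1}$, to explicit one-variable Gaussian and chi-square computations in the fresh pair $(a_k,b_{k-1})$, and reproduce the heuristic variance calculation above.

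The main obstacle will be the stability/tail analysis of the nonlinear one-dimensional Markov chain $R_k=a_k-b_{k-1}^2/R_{k-1}$: one must show, with overwhelming probability, that $R_k$ stays comparable in magnitude to its deterministic envelope (roughly $\sqrt{n-k+1}$), so that $1/R_k$ is uniformly controlled and $R_k$ never comes too close to zero, and that $\log|R_k|$ has sub-Gaussian fluctuations about that envelope. This is precisely where the stochastic-calculus perspective is most useful: on a logarithmic time scale the chain is well-approximated by an Ornstein--Uhlenbeck-type SDE, from whose invariant measure one can extract both the required concentration estimates and the precise variance constants $\frac{1}{2}$ (GUE) and $1$ (GOE) appearing in the theorem.
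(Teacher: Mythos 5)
Your strategy shares the same starting point as the paper (Trotter's tridiagonalization plus a martingale CLT), but the subsequent reduction is genuinely different. The paper works with the two-dimensional state $(E_{2j}, E_{2j-1})$ (where $E_i = D_i/\sqrt{i!}$), studies the squared norm $F_j = E_{2j}^2 + E_{2j-1}^2$, and exploits the fact that $F_j/F_{j-1}$ is, up to $O(j^{-3/2})$ errors, a multiplicative factor whose conditional mean and conditional variance given the past are \emph{deterministic} to leading order; the phase of the two-vector is then handled separately by showing it equidistributes on the circle. Your approach instead works with the one-step ratio chain $R_k = D_k/D_{k-1}$ and the telescoping identity
\[
2\log|\det T_n| = \log|R_1| + \log|R_n| + \sum_{k=1}^{n-1}\log b_k^2 + \sum_{k=2}^n\log\Bigl|1-\tfrac{a_k R_{k-1}}{b_{k-1}^2}\Bigr|,
\]
which cleanly isolates a sum of independent $\log\chi^2$ terms plus a correction sum coupled to the chain. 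This is an attractive algebraic device, and the leading-order variance bookkeeping you give for the two sums does reproduce the constants $\tfrac12\log n$ (GUE) and $\log n$ (GOE).

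The gap is in the martingale step, and it is more serious than your ``main obstacle'' paragraph suggests. You define $M_k := \E[\log|\det T_n|\mid\CF_k] - \E[\log|\det T_n|\mid\CF_{k-1}]$ and assert that computing $\E[M_k^2\mid\CF_{k-1}]$ and checking Lindeberg ``reduce, upon conditioning on $\CF_{k-1}$, to explicit one-variable Gaussian and chi-square computations in the fresh pair $(a_k,b_{k-1})$.'' That is not true as stated: $\E[\log|\det T_n|\mid\CF_k]$ averages over \emph{all} future terms in the telescoped sum, each of which involves $R_\ell$ for $\ell>k$, and each $R_\ell$ depends on $(a_k,b_{k-1})$ through the nonlinear recursion. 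So $M_k$ is the $k$-th term of your identity \emph{plus} the cumulative shift in the conditional expectations of all later terms caused by revealing $(a_k,b_{k-1})$. To localize $M_k$ you would need a quantitative decorrelation/mixing estimate for the chain $R_k$ showing that this influence decays fast enough in $\ell-k$ to be summable --- and since the conditional means $\E[\log|1-a_\ell R_{\ell-1}/b_{\ell-1}^2|\mid\CF_{\ell-1}]\approx -\tfrac12 R_{\ell-1}^2/b_{\ell-1}^4$ themselves fluctuate at scale $1/\ell$, which sums to order $\log n$ (the same scale as the limiting variance), this is not a negligible correction that can be waved away. The same issue undermines the claim that the $S_1$- and $S_2$-contributions simply add: once one moves from the formal identity to the martingale differences, the cross-terms must be controlled, and they are not manifestly zero. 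The paper's switch to the two-step vector and the norm $F_j$ is precisely the device that sidesteps this: after that change of variables the factor $F_j/F_{j-1}$ depends on the past only through the direction $\theta_{j-1}$, whose influence on the \emph{conditional variance} is uniformly bounded (in fact the conditional variance of $h_j$ is identically $1$), so the martingale CLT applies directly with no mixing estimate for a nonlinear scalar chain. A further, more minor, issue: you state that $R_k$ has envelope $\sqrt{n-k+1}$ (Dumitriu--Edelman $\chi$-indexing) but also that $R_n\approx\sqrt n$ so that $\E\log|R_n|\approx\tfrac12\log n$; these two statements use opposite indexing conventions and one of them must be corrected before the mean can be assembled. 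In short, the telescoping identity is a valid and interesting alternative starting point, but the martingale analysis built on it requires a genuine stability/mixing theorem for the ratio chain that the sketch only gestures at, whereas the paper's two-step normalization makes the corresponding step essentially automatic.
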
 

Informally, this theorem asserts that
$$ |\det M_n| \approx n^{-1/4} \sqrt{n!} \exp( N(0,\frac{1}{2} \log n)_\R )$$
for GUE, and
$$ |\det M_n| \approx n^{-1/4} \sqrt{n!} \exp( N(0,\log n)_\R )$$
for GOE (compare with \eqref{detain}, \eqref{detain-2}).  Note also that these distributions are consistent with the moment computations in Theorem \ref{secthm}.

As mentioned previously, Theorem \ref{clt-gue} has also been proven (using the explicit joint density distribution of the GUE and GOE eigenvalues) by Delannay and Le Caer \cite{delannay}.  However our approach is quite different in nature and somewhat less computational, and may be of independent interest.

The next task is to extend  beyond the GUE or GOE case.  Our main tool for this is a four moment theorem for log-determinants of Wigner matrices, analogous to the four moment theorems for eigenvalues \cite{TVlocal1}, \cite{TVlocal2}, \cite{TVlocal3}, Green's functions \cite{EYY}, and eigenvectors \cite{TV-vector}, \cite{knowles}.  Let us say that two Wigner matrices $M_n = (\zeta_{ij})_{1 \leq i, j \leq n}$ and $M'_n = (\zeta'_{ij})_{1 \leq i,j \leq n}$ \emph{match to order $m$} off the diagonal and to order $k$ on the diagonal if one has
$$ \E (\Re \zeta_{ij})^a  (\Im \zeta_{ij})^b =  \E (\Re \zeta'_{ij})^a  (\Im \zeta'_{ij})^b $$
for all $1 \leq i \leq j \leq n$ and natural numbers $a, b \geq 0$ with $a+b \leq m$ for $i<j$ and $a+b \leq k$ for $i=j$.

\begin{theorem}[Four moment theorem for determinant]\label{four-moment}  Let $M_n, M'_n$ be Wigner matrices whose atom distributions have independent real and imaginary parts that match to fourth order off the diagonal and to second order on the diagonal, are bounded in magnitude by $n^{c_0}$ for some sufficiently small but fixed $c_0 > 0$, and are supported on at least three points.  
Let $G: \R \to \R$ obey the derivative estimates
\begin{equation}\label{g-bound}
 |\frac{d^j}{dx^j} G(x)| =O( n^{c_0})
\end{equation}
for $0 \leq j \leq 5$.  Let $z_0 = E + \sqrt{-1}\eta_0$ be a complex number with $|E| \leq 2-\delta$ for some fixed $\delta>0$.  Then
$$ \E G( \log|\det (M_n-\sqrt{n} z_0)| ) - \E G( \log|\det(M'_n-\sqrt{n} z_0)| ) = O( n^{-c} )$$
for some fixed $c>0$, adopting the convention that $G(-\infty)=0$.

If $E=0$, then the requirement that the real and imaginary parts of the atom distribution are supported on at least three points can be dropped.
\end{theorem}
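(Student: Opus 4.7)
The plan is to use a Lindeberg-type replacement argument, parallel to the four-moment theorems of \cite{TVlocal1}, \cite{TV-vector}, \cite{EYY}. I enumerate the upper-triangular entries of the matrices and swap them one at a time, from those of $M_n$ to those of $M'_n$. Writing $M^{(0)}$ for the matrix with the currently swapped entry zeroed out and $V_{pq}$ for the Hermitian elementary matrix associated with the $(p,q)$ entry, it suffices to show that
$$F(t) := G\bigl(\log|\det(M^{(0)} + tV_{pq} - \sqrt{n}\, z_0)|\bigr)$$
satisfies $\E F(\zeta_{pq}) - \E F(\zeta'_{pq}) = O(n^{-2-c})$ off the diagonal and $O(n^{-1-c})$ on the diagonal, independently of the other entries. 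Summing the $O(n^2)$ off-diagonal swaps and the $n$ diagonal swaps then yields the stated $O(n^{-c})$ bound.

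For each individual swap I Taylor-expand $F(t)$ around $t=0$ to order five off the diagonal and to order three on the diagonal. The zeroth through fourth (resp.\ second) order coefficients cancel in expectation by the four-moment hypothesis, so the task reduces to controlling the top-order Taylor remainder. By the resolvent identity
$$\frac{d}{dt}\log|\det(M^{(0)} + tV_{pq} - \sqrt{n}\, z_0)| = \Re\,\tr\bigl(V_{pq}\, R(t)\bigr), \qquad R(t) := (M^{(0)} + tV_{pq} - \sqrt{n}\, z_0)^{-1},$$
and Faà di Bruno applied to the chain $F = G\circ H$, the $k$-th derivative $F^{(k)}(t)$ is a polynomial of bounded degree in the entries $R_{ij}(t)$ (with $i,j\in\{p,q\}$), weighted by derivatives of $G$ evaluated at $\log|\det|$. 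Combined with \eqref{g-bound} and the bulk entry bound $|R_{ij}(t)| \le n^{-1/2 + O(c_0)}$, the fifth-order remainder is $O(n^{-5/2 + O(c_0)})$ off-diagonal and the third-order remainder is $O(n^{-3/2 + O(c_0)})$ on-diagonal, which comfortably beats the per-swap thresholds for $c_0$ small enough.

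The central input is therefore the uniform bulk resolvent bound $|R_{ij}(t)| \le n^{-1/2 + O(c_0)}$, valid with overwhelming probability for all $|t| \le n^{c_0}$ along the swap path. In the bulk regime $|E|\le 2-\delta$ this follows from the standard local semicircle law for Wigner matrices, provided one has a polynomial lower bound on the smallest singular value of the shifted matrix $M^{(0)} + tV_{pq} - \sqrt{n}\, z_0$. This is exactly the content of the Nguyen \cite{nguyen} and Vershynin \cite{vershynin} least-singular-value estimates in the bulk, which invoke the three-point support hypothesis; when $E=0$ the analogous bound is available from \cite{TVlocal1} and \cite{nguyen} without this hypothesis, which is why it can be dropped in that case. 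On the rare (super-polynomially small) event where the singular value bound fails, $\log|\det|$ can be $-\infty$ or very negative, but the convention $G(-\infty)=0$ together with the pointwise bound $|G|=O(n^{c_0})$ from \eqref{g-bound} absorbs the contribution into a negligible remainder.

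The main obstacle is that a single swap is a rank-two Hermitian perturbation of size $|t|\le n^{c_0}$, which is large on the microscopic spectral scale $1/\sqrt{n}$, so a priori the local semicircle bounds at $M^{(0)}+tV_{pq}$ need not be inherited from those at $M_n$. As in \cite{TVlocal1}, the resolution is Cauchy interlacing: a rank-two perturbation can shift each eigenvalue by at most two positions, so the local laws survive along the full interpolation path up to an additional $n^{O(c_0)}$ rigidity loss that can be absorbed into the error budget. Careful uniform bookkeeping of this interlacing loss for all $t\in [0,\zeta_{pq}]$, together with a truncation to the overwhelming-probability event where the resolvent and singular-value bounds hold, constitutes the principal technical work of the proof.
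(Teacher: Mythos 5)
Your overall architecture -- Lindeberg swapping entry by entry, Taylor expansion to fourth/second order, cancellation of the low-order terms by moment matching, and a per-swap budget of $n^{-5/2+O(c_0)}$ off the diagonal and $n^{-3/2+O(c_0)}$ on it -- is exactly the paper's. The paper differs in one structural choice: instead of Taylor-expanding $G(\log|\det(\cdot+tV)|)$ directly via Fa\`a di Bruno, it first writes $\log|\det(W_n-z_0)|$ as $-n\Im\int_{\eta_0}^{n^{100}} s(E+\sqrt{-1}\eta)\,d\eta$ plus an easy boundary term, and then Taylor-expands the Stieltjes transform $s_t$ in $t$ (Proposition \ref{taylor-prop}), with coefficients bounded by $\|R_0\|_{(\infty,1)}^j\min(\|R_0\|_{(\infty,1)},\frac{1}{n\eta})$ so that the $\eta$-integral converges. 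Your direct expansion is a legitimate variant, but the deferred ``technical bookkeeping'' is where your stated mechanisms break down, in three places.

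First, the least-singular-value input. You need $\min_i|\lambda_i(W_n)-E|\geq n^{-1-c_0}$ so that the (normalised) resolvent has $(\infty,1)$-norm $n^{O(c_0)}$; you attribute this to \cite{nguyen} and \cite{vershynin}, but as the paper notes those results only give $n^{-C}$ for some fixed $C$, which would inflate your fifth-order remainder to $n^{O(C)}$ and destroy the budget. The actual source is level repulsion (\cite{maltsev}, \cite{ESY3}) for smooth atom distributions combined with the eigenvalue Four Moment Theorem and a moment-matching argument -- and this is precisely where the three-point-support and $|E|\leq 2-\delta$ hypotheses enter, not through an invertibility theorem. Second, this bound holds only with \emph{high} probability $1-O(n^{-c})$, not overwhelming probability, so the exceptional event cannot be discarded per swap against a threshold of $n^{-5/2}$; the paper resolves this by inserting the cutoff $\chi(\Im s(E+\sqrt{-1}n^{-1-2A_0c_0}))$ into the expectation and proving insensitivity of the product $\E G(\cdot)\chi(\cdot)$, so that the bad event is excluded once rather than $n^2$ times. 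Your ``truncation to the overwhelming-probability event'' does not account for this. Third, your proposed use of Cauchy interlacing to propagate the bounds along the swap path controls eigenvalue \emph{counts} but not the distance $\min_i|\lambda_i-E|$: a rank-two perturbation of size $n^{c_0}$ can in principle place an eigenvalue arbitrarily close to $E$. The paper instead propagates the resolvent bound via the Neumann series (Lemma \ref{neum}), valid under $|t|\|R_0\|_{(\infty,1)}=o(\sqrt n)$, which transfers $\|R_{\xi^{(0)}}\|_{(\infty,1)}\ll n^{O(c_0)}$ to $R_0$ and thence to $R_t$ for every $t$ on the path. With these three repairs your outline becomes the paper's proof.
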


We prove this theorem in Section \ref{section:4moment}, following a preparation in Section 
\ref{section:swapping}.  The requirements that $M_n, M'_n$ be supported on at least three points, and that $E$ lie in the bulk region $|E| < 2-\delta$ are artificial, due to the state of current literature on level repulsion estimates (see Proposition \ref{lsv-lsl}).  It is likely that with further progress on those estimates that these hypotheses can be removed.  The hypothesis that the atom distributions have independent real and imaginary parts is mostly for notational convenience and can also be removed with some additional effort.  The hypothesis that the entries are bounded in magnitude by $n^{c_0}$ is, strictly speaking, not satisfied for distributions such as the Gaussian distribution, but in practice we will be able to reduce to this case by a truncation argument.

By combining Theorem \ref{four-moment} with Theorem \ref{clt-gue} we obtain

\begin{corollary}[Central limit theorem for log-determinant of Wigner matrices]\label{clt-wigner}  Let $M_n$ be a Wigner matrix whose atom distributions $\zeta_{ij}$ are independent of $n$, have real and imaginary parts that are independent and match GUE to fourth order, and obey Condition \condone for some sufficiently large absolute constant $C_0$.  Then
$$ \frac{\log |\det(M_n)| -  \frac{1}{2} \log n! + \frac{1}{4} \log n}{\sqrt{\frac{1}{2} \log n}} \rightarrow N(0,1)_\R. $$
If $M_n$ matches GOE instead of GUE, then one instead has
$$ \frac{\log |\det(M_n)| -  \frac{1}{2} \log n! + \frac{1}{4} \log n}{\sqrt{\log n}} \rightarrow N(0,1)_\R. $$
\end{corollary}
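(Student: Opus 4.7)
The plan is to prove Corollary \ref{clt-wigner} by combining the Gaussian case (Theorem \ref{clt-gue}) with the Four Moment Theorem (Theorem \ref{four-moment}), specialized to $z_0 = 0$. Let $M'_n$ be an auxiliary Wigner matrix drawn from GUE (in the GUE-matching case) or GOE (in the GOE-matching case), and let $Y_n, Y'_n$ denote the normalized log-determinants of $M_n, M'_n$ appearing in the statement of the corollary. Theorem \ref{clt-gue} gives $Y'_n \to N(0,1)_\R$, so it suffices to prove $\Pr(Y_n \leq t) - \Pr(Y'_n \leq t) = o(1)$ for every fixed $t \in \R$.

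Since Theorem \ref{four-moment} requires entries bounded by $n^{c_0}$, the first step is truncation. Fix a small $c_0 > 0$ as in Theorem \ref{four-moment} and set $\tilde\zeta_{ij} := \zeta_{ij} \mathbf{1}_{|\zeta_{ij}| \leq n^{c_0}}$. By Condition \condone\ with $C_0$ taken sufficiently large (depending only on $c_0$), the event $\{\tilde\zeta_{ij} \neq \zeta_{ij} \text{ for some } i, j\}$ has probability $o(1)$, and the first four moments of $\tilde\zeta_{ij}$ differ from those of $\zeta_{ij}$ by $O(n^{-c})$. A small affine adjustment plus, if necessary, the addition of a tiny independent random perturbation produces a bounded atom distribution $\hat\zeta_{ij}$ that exactly matches the corresponding GUE (resp.\ GOE) atom to fourth order off the diagonal and to second order on the diagonal. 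The resulting matrix $\hat M_n$ agrees with $M_n$ with probability $1 - o(1)$ and shifts $\log |\det|$ by $o(1)$, so it suffices to prove the CLT for $\hat M_n$.

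Next I convert the four-moment comparison into a comparison of distribution functions via a smoothing-and-sandwich argument. Fix $t \in \R$ and set $\sigma := n^{-c_0/5}$, $T := n$. Choose smooth $\phi_+, \phi_- \colon \R \to [0, 1]$ satisfying $\phi_-(x) \leq \mathbf{1}_{x \leq t} \leq \phi_+(x)$ for $x \geq -T + 1$, with $\phi_+$ supported in $[-T, t + \sigma]$ and $\phi_-$ supported in $[-T, t - \sigma]$, and with $|\phi_\pm^{(j)}(x)| \ll \sigma^{-j} + 1$ for $0 \leq j \leq 5$. Pulling $\phi_\pm$ back through the affine map $u \mapsto (u - \tfrac12 \log n! + \tfrac14 \log n)/b_n$, where $b_n = \sqrt{\tfrac12 \log n}$ in the GUE case and $b_n = \sqrt{\log n}$ in the GOE case, produces test functions $G_\pm$ satisfying the derivative bound \eqref{g-bound} (since $b_n \geq 1$) and the convention $G_\pm(-\infty) = 0$. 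Applying Theorem \ref{four-moment} at $z_0 = 0$ (where $E = 0$ lies in the bulk and the ``three-point support'' hypothesis is automatically waived) yields
$$ \E \phi_\pm(Y_n) = \E \phi_\pm(Y'_n) + O(n^{-c}). $$
The low-tail cutoff at $-T$ is harmless: the Hermitian analogue of \eqref{TVlow} (proved in \cite[Theorem 31]{TVlocal1}, with the vanishing third-moment hypothesis removed using the least-singular-value estimates of \cite{nguyen, vershynin}) ensures that $\log |\det \hat M_n| \geq \tfrac12 \log n! - O(\sqrt{n \log n})$ with probability $1 - o(1)$, so that $\Pr(Y_n \leq -T + 1) = o(1)$ and likewise for $Y'_n$. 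A standard sandwich argument together with Theorem \ref{clt-gue} then gives $\Pr(Y_n \leq t) \to \Phi(t)$, which is the desired conclusion. The main technical obstacle in this plan is the truncation-plus-exact-moment-matching step: one must engineer the bounded atom distribution $\hat\zeta_{ij}$ so that the first four moments match GUE/GOE exactly while remaining within $o(1)$ in a coupling with $\zeta_{ij}$; once this is set up, the remainder is a routine application of the four moment theorem.
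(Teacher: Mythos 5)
Your proposal is correct and follows exactly the route the paper has in mind: the paper omits this deduction as ``standard'' (pointing to the proof of \cite[Corollary 21]{TVlocal1}), and your truncation-plus-moment-readjustment, application of Theorem \ref{four-moment} at $z_0=0$ (so that the three-point-support hypothesis is waived), and smoothed-indicator sandwich against Theorem \ref{clt-gue} is precisely that standard argument. The one step worth writing carefully is the one you already flag --- engineering exact fourth-order matching after truncation (both matrices must be truncated, since Gaussian atoms also violate the $n^{c_0}$ bound) --- but this is the same technical point handled in \cite{TVlocal1}.
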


The deduction of this corollary from Theorem \ref{four-moment} and Theorem \ref{clt-gue} is standard (closely analogous, for instance, to the proof of \cite[Corollary 21]{TVlocal1}, which establishes a similar central limit theorem for individual eigenvalues of a Wigner matrix) and is omitted.
(Notice that in order for the atom variables of $M_n$ match those of GUE 
to fourth order, these variables must have at least three points in their supports.) 

%Finally, we would like to mention that our method may work  for 
%symmetric random matrices. For details, see Section \ref{section:remarks}. 

%The rest of the paper is organized as follows. {\bf To be filled in.} 

\subsection{Notation}\label{not}  Throughout this paper, $n$ is a natural number parameter going off to infinity; in particular we will assume that $n \geq 100$ (so that $\log\log\log n$ is well-defined).  A quantity is said to be \emph{fixed} if it does not depend on $n$.  We write $X = O(Y)$, $X \ll Y$, or $Y \gg X$ if one has $|X| \leq CY$ for some fixed $C$, and $X = o(Y)$ if one has $X/Y \to 0$ as $n \to \infty$.  Absolute constants such as $C_0$ or $c_0$ are always understood to be fixed.

We say that an event $E$ occurs with \emph{high probability} if it occurs with probability $1-O(n^{-c})$ for some fixed $c > 0$, and with \emph{overwhelming probability} if it occurs with probability $1-O(n^{-A})$ for all fixed $A>0$.

\subsection{Acknowledgments} We thank Brad Rodgers and Zhigang Bao for references, Peter Eichelsbacher, Xiuyuan Cheng, and the anonymous referee for corrections, and Rowan Killip for suggesting and explaining the tridiagonal method.

\section{Central limit theorem for GUE} \label{section:GUE}

We now prove Theorem \ref{clt-gue}.  For notational reasons we shall take $n$ to be even, but the argument below can easily be verified to also work with minor modifications when $n$ is odd.  We will use a method suggested to us by Rowan Killip (private communication), and loosely based on the arguments in \cite{killip}.

We will work for most of this section with the GUE case, and discuss the changes in the numerology needed to address the GOE case at the end of the section.

The starting point is the following beautiful observation of Trotter \cite{trotter}:

\begin{proposition}[Tridiagonal form of GUE]\label{trotter}\cite{trotter} Let $M'_n$ be the random tridiagonal real symmetric matrix
$$ M'_n = \begin{pmatrix}
a_1 & b_1 & 0 & \ldots & 0 & 0 \\
b_1 & a_2 & b_2 & \ldots & 0 & 0 \\
0 & b_2 & a_3 & \ldots & 0 & 0 \\
\vdots & \vdots & \vdots & \ddots & \vdots & \vdots \\
0 & 0 & 0 &  \ldots & a_{n-1} & b_{n-1} \\
0 & 0 & 0 & \ldots & b_{n-1} & a_n
\end{pmatrix}$$
where the $a_1,\ldots,a_n, b_1,\ldots,b_{n-1}$ are jointly independent real random variables, with $a_1,\ldots,a_n \equiv N(0,1)_\R$ being standard real Gaussians, and each $b_i$ having a complex $\chi$-distribution:
$$ b_i = (\sum_{j=1}^i |z_{i,j}|^2)^{1/2}$$
where $z_{i,j} \equiv N(0,1)_\C$ are iid complex Gaussians\footnote{In other words, the real and imaginary parts of $z_{i,j}$ are independent with distribution $N(0,1/2)_\R$.}.  Let $M_n$ be drawn from GUE.  Then the joint eigenvalue distribution of $M_n$ is identical to the joint eigenvalue distribution of $M'_n$.
\end{proposition}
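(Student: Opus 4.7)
The plan is to tridiagonalize $M_n$ iteratively by Householder-style unitary conjugations, peeling off one row and column at a time from the bottom-right corner. Since the eigenvalue distribution is preserved under unitary conjugation $M_n \mapsto V M_n V^*$ and GUE is itself invariant under conjugation by any independent unitary, at each stage I can replace the remaining block by a unitary conjugate without changing the eigenvalue law, and the remaining block stays GUE-distributed. The goal is to show that the off-diagonal entries created during this process have exactly the complex chi-distributions of the $b_i$ in the proposition, the exposed diagonal entries are standard real Gaussians, and the whole collection is jointly independent.

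For the first step, write
\[
M_n = \begin{pmatrix} M_{n-1} & v \\ v^* & a_n \end{pmatrix},
\]
where $a_n \sim N(0,1)_\R$, $v \in \C^{n-1}$ has iid $N(0,1)_\C$ coordinates, and $M_{n-1}$ is GUE of dimension $n-1$, the three being mutually independent. I would choose a measurable random unitary $U \in U(n-1)$, depending only on $v$, satisfying $Uv = \|v\|_2 \, e$ where $e$ is a fixed unit vector in $\C^{n-1}$ (say the last standard basis vector); this is the usual complex Householder construction with a phase adjustment chosen to make the resulting coefficient real and nonnegative. Conjugating $M_n$ by $\diag(U,1)$ then leaves $a_n$ alone, replaces $v$ by $\|v\|_2 e$, and replaces $M_{n-1}$ by $U M_{n-1} U^*$. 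The resulting entry in position $(n-1,n)$ is $\|v\|_2 = (\sum_{j=1}^{n-1} |v_j|^2)^{1/2}$, which is exactly the complex $\chi$-distribution assigned to $b_{n-1}$.

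Iterating this construction on the upper-left GUE$(n-1)$ block produces, in order, the diagonal entries $a_{n-1}, a_{n-2}, \ldots, a_1$ as iid $N(0,1)_\R$ and the subdiagonal entries $b_{n-2}, b_{n-3}, \ldots, b_1$ with the prescribed complex chi-distributions, and the full collection is jointly independent. The main point that requires care is verifying, at each stage, that $U M_{n-1} U^*$ is independent of the whole vector $v$ (not merely of $\|v\|_2$) and is again GUE$(n-1)$-distributed. I would argue this by conditioning on $v$: given $v$, the unitary $U$ is deterministic and $M_{n-1}$ is still GUE$(n-1)$ by independence, so $U M_{n-1} U^* \sim \mathrm{GUE}(n-1)$ by unitary invariance of GUE; since this conditional law does not depend on the value of $v$, the block $U M_{n-1} U^*$ is unconditionally GUE$(n-1)$ and independent of $v$. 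Chaining these independence statements across the $n-1$ iterations delivers the joint distribution of $M'_n$, and hence the desired equality in eigenvalue law. For the GOE case the same argument goes through with real Householder reflections, replacing $U(n-1)$ by $O(n-1)$ and the complex chi-distribution by its real analogue.
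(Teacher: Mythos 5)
Your proposal is correct and follows essentially the same route as the paper: the same corner decomposition of $M_n$ into a GUE$(n-1)$ block, a Gaussian vector $v$, and a real Gaussian $a_n$, followed by a Householder-type unitary conjugation mapping $v$ to $\|v\|_2 e$, and then iteration. The one place you elaborate slightly beyond the paper is the conditioning argument establishing that $UM_{n-1}U^*$ is GUE$(n-1)$ and independent of the full vector $v$ (not merely $\|v\|_2$), which is indeed the crux that makes the iteration legitimate; the paper states this more tersely as a consequence of unitary invariance and independence.
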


\begin{proof}  Let $M_n$ be drawn from GUE.  We can write
$$
M_n = \begin{pmatrix} 
M_{n-1} & X_n \\ X_n^* & a_n
\end{pmatrix}
$$
where $M_{n-1}$ is drawn from the $n-1\times n-1$ GUE, $a_n \equiv N(0,1)_\R$, and $X_n \in \C^{n-1}$ is a random Gaussian vector with all entries iid with distribution $N(0,1)_\C$.  Furthermore, $M_{n-1}, X_n, a_n$ are jointly independent.  

We now apply the tridiagonal matrix algorithm.
Let $b_{n-1} := |X_n|$, then $b_n$ has the $\chi$-distribution indicated in the proposition.  We then conjugate $M_n$ by a unitary matrix $U$ that preserves the final basis vector $e_n$, and maps $X_n$ to $b_{n-1} e_{n-1}$.  Then we have
$$
U M_n U^* = \begin{pmatrix} 
\tilde M_{n-1} & b_{n-1} e_{n-1} \\ b_{n-1} e_{n-1}^* & a_n
\end{pmatrix}
$$
where $\tilde M_{n-1}$ is conjugate to $M_{n-1}$.  Now we make the crucial observation: because $M_{n-1}$ is distributed according to GUE (which is a unitarily invariant ensemble), and $U$ is a unitary matrix independent of $M_{n-1}$, $\tilde M_{n-1}$ is also distributed according to GUE, and remains independent of both $b_{n-1}$ and $a_n$.  

We continue this process, expanding $U M_n U^*$ as
$$\begin{pmatrix} 
M_{n-2} & X_{n-1} & 0 \\
X_{n-1}^* & a_{n-1} & b_{n-1} \\
0 & b_{n-1} & a_n.
\end{pmatrix}
$$
Applying a further unitary conjugation that fixes $e_{n-1}, e_n$ but maps $X_{n-1}$ to $b_{n-2} e_{n-2}$, we may replace $X_{n-1}$ by $b_{n-2} e_{n-2}$ while transforming $M_{n-2}$ to another GUE matrix $\tilde M_{n-2}$ independent of $a_n, b_{n-1}, a_{n-1}, b_{n-2}$.  Iterating this process, we eventually obtain a coupling of $M_n$ to $M'_n$ by unitary conjugations, and the claim follows.
\end{proof}

In what follows, we are going to prove the limit law for the model $M'_n$ and hence for $M_n$. 
Since $b_i^2$ has expectation $i$  and variance\footnote{Note that the more familiar \emph{real} chi squared distribution $\chi_i^2$ would have variance $2i$ here, but $b_i^2$ has the \emph{complex} chi squared distribution which has variance $i$.} $i$,  we can write it as 
$$ b_i^2 = i + \sqrt {i} c_i, $$ 
where $c_i$ has mean $0$ and variance $1$. 

By the properties of normal distribution and chi square distribution (or from concentration of measure inequalities), we have the following tail bound. There are constants $C_1 , C_2 > 0 $ such that for all  $i \geq 1$ and $t \geq 0$, one has
\begin{equation} \label{tailbound} 
\P (\max\{ |a_i|, |c_i| \} \ge t ) \le C_1 \exp(-t^{C_2} ). 
\end{equation} 

Let $M'_i$ denote the upper left $i \times i$ minor of $M'_n$, and write $D_i := \det M'_i$.  From cofactor expansion we have the recursion
$$ D_i = a_i D_{i-1} - b_{i-1}^2 D_{i-2}$$
for all $i \geq 2$.  To prove Theorem \ref{clt-gue}, we need to establish the law
$$ \frac{\log |D_n| - \frac{1}{2}  \log {(n-1)!} -\frac{1}{4} \log n }{\sqrt{\frac{1}{2} \log n}} \rightarrow N(0,1)_\R. $$

It will be convenient to skip the first few terms of this recursion.  Let $m$ be a sufficiently slowly growing integer-valued function of $n$ (e.g. $m := \lfloor \log\log\log n \rfloor$ will suffice); we will only apply this recursion for $i \geq m$.   Notice that $D_i \neq 0$ with probability one for all $i$.

We then have
$$ D_i = a_i D_{i-1} - (i-1 + \sqrt{i-1} c_{i-1}) D_{i-2}.$$
To mostly eliminate the $i-1$ factor, we introduce the normalised determinants
$$ E_i := \frac{D_i}{\sqrt{i!}}$$
and conclude the recurrence
$$ E_i = \frac{a_i}{\sqrt{i}} E_{i-1} - \left( \frac{\sqrt{i-1}}{\sqrt{i}} + \frac{c_{i-1}}{\sqrt{i}}\right) E_{i-2}.$$
By Taylor expansion we can rewrite this as
\begin{equation}\label{est}
 E_i = \frac{a_i}{\sqrt{i}} E_{i-1} - \left( 1 + \frac{c_{i-1}}{\sqrt{i}} - \frac{1}{2i} + O( \frac{1}{i^2} )\right) E_{i-2}.
\end{equation}
Our task is now to show that
\begin{equation}\label{en-law}
 \frac{\log |E_n| + \frac{1}{4} \log n }{\sqrt{\frac{1}{2} \log n}} \rightarrow N(0,1)_\R. 
 \end{equation}

To deduce this central limit theorem from \eqref{est}, we would like to write $\log |E_i |$  as a sum of 
martingale differences. But it is rather hard to do from the above recursive formula \eqref{est}. 
We will need to perform an additional algebraic manipulation to obtain a more 
tractable formula involving the closely related quantity $F_j := E_{2j}^2 +E_{2j-1}^2$.  In particular, we will establish 

\begin{proposition}[Central limit theorem for $F_{n/2}$]\label{fn-clt}  We have
\begin{equation} \label{CLT1} \frac{\log F_{n/2} +  \frac{1}{2} \log n} {\sqrt {2 \log n}} \rightarrow N(0,1)_{\R}.\end{equation}
\end{proposition}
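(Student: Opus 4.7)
The plan is to telescope
$$ \log F_{n/2} = \log F_m + \sum_{j=m+1}^{n/2} \log(F_j/F_{j-1}),$$
decompose each increment as a predictable drift plus a martingale difference, and invoke the martingale central limit theorem. Setting $\vec v_j := (E_{2j-1}, E_{2j})$, applying \eqref{est} with $i=2j-1$ and $i=2j$ yields a transfer-matrix relation $\vec v_j = A_j \vec v_{j-1}$, where $A_j$ is a random $2\times 2$ matrix whose entries are polynomials in the fresh variables $\alpha_1 = a_{2j-1}/\sqrt{2j-1}$, $\alpha_2 = a_{2j}/\sqrt{2j}$, $\beta_1 = 1 + c_{2j-2}/\sqrt{2j-1} + O(1/j)$, $\beta_2 = 1 + c_{2j-1}/\sqrt{2j} + O(1/j)$, all independent of $\mathcal F_{j-1} := \sigma(a_1,\ldots,a_{2j-2},c_1,\ldots,c_{2j-3})$. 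Thus $F_j/F_{j-1} = |A_j \hat v_{j-1}|^2$ with $\hat v_{j-1} := \vec v_{j-1}/|\vec v_{j-1}|$, and $\vec v_{j-1}$ is $\mathcal F_{j-1}$-measurable.

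The heart of the argument is the conditional mean and variance of $\log(F_j/F_{j-1})$ given $\mathcal F_{j-1}$. Writing $p := E_{2j-3}^2/F_{j-1}$, $q := E_{2j-2}^2/F_{j-1}$ (so $p+q=1$) and $r := E_{2j-3} E_{2j-2}/F_{j-1}$ (so $r^2 = pq$), expansion of $|A_j \hat v_{j-1}|^2$ to first order in $1/\sqrt j$ gives
$$ \frac{F_j}{F_{j-1}} - 1 = 2\tilde c_1 p + 2\tilde c_2 q + 2(\alpha_2 - \alpha_1) r + O_\P(1/j),$$
with $\tilde c_k := c_{2j-3+k}/\sqrt{2j-2+k}$. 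A direct moment calculation using the independence of $\alpha_1,\alpha_2,\tilde c_1,\tilde c_2$, combined with the algebraic identity $p^2 + q^2 + 2pq = 1$ that makes the leading variance angle-free, yields
\begin{align*}
\mu_j &:= \E[\log(F_j/F_{j-1}) \mid \mathcal F_{j-1}] = -\tfrac{1}{2j} + O(j^{-3/2}), \\
\sigma_j^2 &:= \Var[\log(F_j/F_{j-1}) \mid \mathcal F_{j-1}] = \tfrac{2}{j} + O(j^{-3/2}).
\end{align*}
The drift $-1/(2j)$ arises as the cancellation of the mean $+1/(2j) = \E[F_j/F_{j-1}-1 \mid \mathcal F_{j-1}]$ against the It\^o-type correction $-\tfrac12 \E[(F_j/F_{j-1}-1)^2 \mid \mathcal F_{j-1}] = -1/j$, while the variance $2/j$ comes from summing the conditional variances of the three independent contributions $2\tilde c_1 p$, $2\tilde c_2 q$, $2(\alpha_2-\alpha_1) r$.

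Setting $X_j := \log(F_j/F_{j-1}) - \mu_j$ defines a martingale difference array adapted to $(\mathcal F_j)_{j>m}$. Since $\log F_m = O_\P(1)$ for slowly growing $m := \lfloor \log\log\log n\rfloor$, summing gives
$$ \log F_{n/2} = -\tfrac12 \log n + o(\sqrt{\log n}) + \sum_{j=m+1}^{n/2} X_j, \qquad \sum_{j=m+1}^{n/2} \sigma_j^2 = 2\log n + o(\sqrt{\log n}).$$
A routine fourth-moment bound $\E[X_j^4 \mid \mathcal F_{j-1}] = O(j^{-2})$ verifies the Lindeberg condition, and the martingale central limit theorem (e.g.\ in the Hall--Heyde formulation) then yields $(\sum_j X_j)/\sqrt{2\log n} \to N(0,1)_\R$, which proves \eqref{CLT1}.

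The principal technical obstacle is making the Taylor expansion of $\log(F_j/F_{j-1})$ uniformly valid. Because $a_i, c_i$ are unbounded (with only subexponential tails from \eqref{tailbound}), $|F_j/F_{j-1}-1|$ can in principle be large on exceptional events, breaking the expansion $\log(1+u) = u - u^2/2 + O(u^3)$. This is handled by a standard truncation: on the overwhelming-probability event that $\max_{i\leq n}\max(|a_i|,|c_i|) \leq \log^{O(1)} n$, one has $|F_j/F_{j-1}-1| = O(j^{-1/2}\log^{O(1)} n) = o(1)$ uniformly for $j \geq m$, so all the above expansions are justified, and the complementary event contributes only negligibly to the moments used in the martingale CLT.
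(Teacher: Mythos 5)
Your proof follows essentially the same route as the paper: telescope $\log F_{n/2}$ over the two-step increments, use the transfer-matrix relation coming from \eqref{est} to expand $F_j/F_{j-1}$, identify the drift $-\tfrac{1}{2j}$ and conditional variance $\tfrac{2}{j}$ (with the cancellation $p^2+q^2+2pq=(p+q)^2=1$ playing exactly the role of the paper's observation that $h_j$ has conditional variance $1$), and then invoke the martingale central limit theorem with a fourth-moment Lindeberg check. The drift and variance computations check out and reproduce the paper's constants.

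One small technical slip in the truncation step at the end: you claim that on the event $\max_i \max(|a_i|,|c_i|) \leq \log^{O(1)}n$ one has $|F_j/F_{j-1}-1| = O(j^{-1/2}\log^{O(1)}n) = o(1)$ \emph{uniformly for} $j \geq m$. This is false for $j$ near $m = \lfloor\log\log\log n\rfloor$, since there $j^{-1/2}\log^{O(1)}n \to \infty$, so the logarithmic Taylor expansion is not justified term by term on this single event. The paper avoids this by a per-term argument: for each $j$ one has $|x_j| + O(Y_j^{O(1)}/j^{3/2}) \leq 1/2$ (say) with probability $1 - O(j^{-100})$ (from \eqref{tailbound} and \eqref{xj-moment}), and summing these failure probabilities over $j > m$ gives $O(m^{-99}) = o(1)$, which is enough. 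Alternatively you could take $m$ to grow polylogarithmically rather than as $\log\log\log n$, at the cost of having to verify that $\log F_m$ is still $o(\sqrt{\log n})$ with probability $1-o(1)$ for such $m$; the paper's choice of slowly growing $m$ sidesteps this by making the law of $F_m$ $n$-independent.
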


We now prove this proposition.  The idea is use Taylor expansions (which can be viewed as a discrete version of Ito's stochastic calculus) to approximate $\log F_{n/2} + \frac{1}{2} \log n$ as the sum $\sum_{j=1}^{n/2} \frac{1}{\sqrt{j}} h_j$ of martingale differences, to which the martingale central limit theorem may be applied.

We turn to the details.  From \eqref{est} for $i=2j,2j-1$ we first observe the crude bound
\begin{equation}\label{crude}
 F_j = O( Y_j^{O(1)} F_{j-1} ),
 \end{equation}
where $Y_j := 1 + |a_{2j}| + |c_{2j-1}| + |a_{2j-1}| + |c_{2j-2}|$.  Observe from \eqref{tailbound} that
\begin{equation}\label{xj-moment}
\E Y_j^l \ll 1
\end{equation}
for any fixed $l$.

Next, we apply \eqref{est} for $i=2j, 2j-1$ and use Taylor expansion (using \eqref{crude} to bound error terms of order $j^{-3/2}$ or better) to obtain
\begin{align*}
 E_{2j} &= \frac{a_{2j}}{\sqrt{2j}} E_{2j-1} - ( 1 + \frac{c_{2j-1}}{\sqrt{2j}} - \frac{1}{4j} ) E_{2j-2} + O\left( \frac{Y_j^{O(1)}}{j^{3/2}} F_{j-1}^{1/2} \right)\\
 E_{2j-1} &= (\frac{a_{2j-1}}{\sqrt{2j}} + \frac{r^{[1]}_j}{j}) E_{2j-2} - ( 1 + \frac{c_{2j-2}}{\sqrt{2j}} - \frac{1}{4j} + \frac{r_j^{[2]}}{j} ) E_{2j-3} + O\left( \frac{Y_j^{O(1)}}{j^{3/2}} F_{j-1}^{1/2} \right),
\end{align*}
where $r_j^{[1]}, r_j^{[2]}$ are random variables bounded in magnitude by $O(Y_j^{O(1)})$ and with mean zero.  (In fact, we can obtain a denominator of $j^2$ instead of $j^{3/2}$ in the error terms here, although this improved error term will not persist in our later analysis.)  Substituting the second equation into the first (and again using \eqref{crude} to handle all terms of order $j^{-3/2}$ or better), we also obtain
$$ E_{2j} = -( 1 + \frac{c_{2j-1}}{\sqrt{2j}} - \frac{1}{4j} + \frac{r_j^{[3]}}{j}) E_{2j-2} - (\frac{a_{2j}}{\sqrt{2j}} + \frac{r^{[4]}_j}{j}) E_{2j-3} +
O\left( \frac{Y_j^{O(1)}}{j^{3/2}} F_{j-1}^{1/2} \right)$$
where $r^{[3]}_j, r^{[4]}_j$ obey the same sort of bounds as $r_j^{[1]}, r_j^{[2]}$.  We may rewrite these estimates in matrix form as
\begin{equation}\label{ejp}
\begin{pmatrix} E_{2j} \\ E_{2j-1} \end{pmatrix}
= ( - 1
+ 
\frac{1}{\sqrt{2j}} G_j + \frac{1}{4j}  + \frac{1}{j} R_j )
\begin{pmatrix} E_{2j-2} \\ E_{2j-3} \end{pmatrix} + O\left( \frac{Y_j^{O(1)}}{j^{3/2}} F_{j-1}^{1/2} \right)
\end{equation}
where $G_j$ is the near-Gaussian matrix
\begin{equation}\label{gj-def}
G_j :=
\begin{pmatrix} 
- c_{2j-1} & -a_{2j} \\
a_{2j-1} & -c_{2j-2}
\end{pmatrix},
\end{equation}
and $R_j$ is a random matrix depending on $j, a_{2j-1}, a_{2j}, c_{2j-2}, c_{2j-1}$ with mean zero and whose entries are bounded by $O(|Y_j|^{O(1)})$.  (We remind the reader at this point that the implied constants in the $O()$ notation are independent of $j$.)

Using \eqref{ejp}, we can express
$$ F_j =  \begin{pmatrix} E_{2j} & E_{2j-1} \end{pmatrix} \begin{pmatrix} E_{2j} \\ E_{2j-1} \end{pmatrix}$$
as
$$ \begin{pmatrix} E_{2j-2} & E_{2j-3} \end{pmatrix} 
( - 1 + \frac{1}{\sqrt{2j}} G_j + \frac{1}{4j}  + \frac{1}{j} R_j )^*
( - 1 + \frac{1}{\sqrt{2j}} G_j + \frac{1}{4j}  + \frac{1}{j} R_j ) 
\begin{pmatrix} E_{2j-2} \\ E_{2j-3} \end{pmatrix} + O\left( \frac{Y_j^{O(1)}}{j^{3/2}} F_{j-1} \right).
$$
We can collect some terms, splitting $G_j^* G_j$ as the sum of $2$ and the mean zero random matrix $G_j^* G_j - 2$, and obtain the expansion\footnote{The $\frac{1}{2j}$ term here arises from combining three contributions $-1 \times \frac{1}{4j} + \frac{1}{4j} \times (-1) + \frac{1}{\sqrt{2j}} \times \frac{1}{\sqrt{2j}} \times 2$.}
\begin{equation}\label{multilog}
F_j = \left(1 + \frac{\sqrt{2}}{\sqrt{j}} h_j + \frac{1}{2j} + \frac{1}{j} k_j + O\left( \frac{Y_j^{O(1)}}{j^{3/2}} \right)\right) F_{j-1} 
\end{equation}
where
\begin{align*}
h_j&:= \frac{1}{F_{j-1}} 
\begin{pmatrix} E_{2j-2} & E_{2j-3} \end{pmatrix}
G_j \begin{pmatrix} E_{2j-2} \\ E_{2j-3} \end{pmatrix} \\
\frac{1}{F_{j-1}} 
\begin{pmatrix} E_{2j-2} & E_{2j-3} \end{pmatrix}
G_j^* \begin{pmatrix} E_{2j-2} \\ E_{2j-3} \end{pmatrix}
\end{align*}
and
$$ k_j := \frac{1}{F_{j-1}} \begin{pmatrix} E_{2j-2} & E_{2j-3} \end{pmatrix} R'_j \begin{pmatrix} E_{2j-2} \\ E_{2j-3} \end{pmatrix}$$
and $R'_j$ is a random matrix depending on $j, a_{2j-1}, a_{2j}, c_{2j-2}, c_{2j-1}$ with mean zero and entries bounded by $O(Y_j^{O(1)})$. 

We can expand $h_j$ as
\begin{equation} \label{hj} 
(-c_{2j-1}) \frac{E_{2j-2}^2} {E_{2j-2}^2 + E_{2j-3}^2 } +  (-c_{2j-2}) \frac{E_{2j-3}^2}{E_{2j-2}^2 + E_{2j-3}^2} 
+ (a_{2j-1} -a_{2j} ) \frac{E_{2j-2} E_{2j-3} } {E_{2j-2}^2 + E_{2j-3}^2} .\end{equation} 
As the $c_l, a_l$ are independent and all have mean $0$ and variance $1$, we conclude that for any fixed $E_{2j-2}$ and $E_{2j-3}$, $h_j$ also has mean zero and variance $1$, thus
\begin{equation} \label{hjcond} 
\E (h_j | \CE_{j-1} ) =0; \,\, \E (h_j^2| \CE_{j-1}) =1, 
\end{equation}  
where $\CE_l$ is the $\sigma$-algebra generated by the random variables $a_1,\ldots,a_{2l}$ and $c_1,\ldots,c_{2l-1}$ (or equivalently, by the entries of the minor $M_{2l}$).  
Similarly, for any fixed choice of $E_{2j-2}, E_{2j-3}$, $k_j$ is a real random variable with mean zero, and thus
\begin{equation}\label{joy}
\E (k_j | \CE_{j-1} ) =0.
\end{equation}
Also, from construction, $h_j, k_j = O(Y_j^{O(1)})$.  

Taking logarithms in \eqref{multilog}, we obtain
$$ \log F_j = \log F_{j-1} + \log \left(1+\frac{1}{2j} + \frac{\sqrt{2}}{\sqrt{j}} h_j +  \frac{1}{j} k_j + O\left( \frac{Y_j^{O(1)}}{j^{3/2}} \right) \right).$$
By telescoping series, we may thus write
$$ \log F_{n/2} = \log F_m + \sum_{j=m+1}^{n/2} \log\left(1 + x_j + O\left( \frac{Y_j^{O(1)}}{j^{3/2}} \right) \right)$$
where
\begin{equation}\label{xj}
 x_j := \frac{1}{2j} + \frac{\sqrt{2}}{\sqrt{j}} h_j +  \frac{1}{j} k_j.
 \end{equation}
For $m$ sufficiently slowly growing in $n$, we clearly have
$$ \log F_m = o( \sqrt{\log n} )$$
with probability $1-o(1)$, since $F_m$ is almost surely finite with a law that depends only on $m$ and not on $n$.  To prove \eqref{CLT1}, it thus suffices to show that
\begin{equation} \label{CLT2} \frac{\sum_{j=m+1}^{n/2} \log\left(1 + x_j + O\left( \frac{Y_j^{O(1)}}{j^{3/2}} \right) \right) +  \frac{1}{2} \log n} {\sqrt {2 \log n}} \rightarrow N(0,1)_{\R}.\end{equation}
The next step is to use Taylor expansion to approximate the logarithm to extract something that more closely resembles a martingale difference.  Observe that $x_j = O( Y_j^{O(1)} / j^{1/2} )$.  From \eqref{xj-moment}, we conclude that with probability $1-O(j^{-100})$ (say), the expression $1 + x_j + O( \frac{Y_j^{O(1)}}{j^{3/2}} )$ lies between $1/2$ and $3/2$ (say).  From the union bound, we thus see that with probability
$$ 1 - \sum_{j=m+1}^{n/2} O(j^{-100}) = 1-o(1),$$
one has
$$ \log\left(1 + x_j + O\left( \frac{Y_j^{O(1)}}{j^{3/2}} \right) \right) = x_j - x_j^2/2 + O\left( \frac{Y_j^{O(1)}}{j^{3/2}} \right)$$
for all $m+1 \leq j \leq n/2$.  As $h_j$ has variance $1$, we can split $h_j^2$ as the sum of $1$ and the mean zero random variable $h_j^2-1$. From
\eqref{xj} we may thus expand
$$ x_j - x_j^2/2 = -\frac{1}{2j} + \frac{\sqrt{2}}{\sqrt{j}} h_j + \frac{1}{j} k'_j + O\left( \frac{Y_j^{O(1)}}{j^{3/2}} \right)$$
where $k'_j$ is a random variable bounded by $O(Y_j^{O(1)})$ which has conditional mean zero:
$$\E (k'_j | \CE_{j-1} ) =0.$$

Similarly, from \eqref{xj-moment} and the union bound again, the $O( Y_j^{O(1)} / j^{3/2} )$ error terms are $O(1/j^{1.1})$ (say) with probability $1-o(1)$, and thus we see that with probability $1-o(1)$, we have
\begin{equation}\label{ass}
 \sum_{j=m+1}^{n/2} \log\left(1 + x_j + \left( \frac{Y_j^{O(1)}}{j^{3/2}} \right) \right) = 
\sum_{j=m+1}^{n/2} \frac{\sqrt{2}}{\sqrt{j}} h_j + \frac{1}{j} k'_j  - \frac{1}{2} \log n + O(1).
\end{equation}
To prove \eqref{CLT2}, it thus suffices to show that
\begin{equation}\label{CLT3}
\frac{\sum_{j=m+1}^{n/2} \frac{\sqrt{2}}{\sqrt{j}} h_j + \frac{1}{j} k'_j}{\sqrt{2\log n}} \rightarrow N(0,1)_{\R}.
\end{equation}
Observe that as each $k'_j$ are martingale differences, which have variance $O(1)$ thanks to \eqref{xj-moment}.  As such, the expression $\sum_{j=m+1}^{n/2} \frac{1}{j} k'_j$ has variance $\sum_{j=m+1}^{n/2} O(1/j^2) = o(\log n)$ and can thus be discarded.  If $m$ is small enough, then the expression $\sum_{j=1}^m \frac{\sqrt{2}}{\sqrt{j}} h_j$ has variance $o(\log n)$ and can similarly be discarded.  Thus it suffices to show that
\begin{equation}\label{CLT4}
\frac{\sum_{j=1}^{n/2} \frac{1}{\sqrt{j}} h_j}{\sqrt{\log n}} \rightarrow N(0,1)_{\R}.
\end{equation}
In order to verify \eqref{CLT4}, we need to invoke the martingale central limit theorem:

\begin{theorem}[Martingale central limit theorem]\label{theorem:martingale}
Assume that $T_{1},\dots,T_{n}$ are martingale differences with respect to the nested $\sigma$-algebra $\CE_0, \CE_{1},\dots, \CE_{n-1}$. Let $v_n^2 := \sum_{i=1}^{n}\E(T_{i}^2|\CE_{i-1})$, and $s_m^2:= \sum_{i=1}^{n}\E(T_{i}^2)$. Assume that 

\begin{itemize}
\item $v_n/s_n \rightarrow 1$ in probability;
\vskip .2in

\item  (Lindeberg condition) for every $\ep>0$, $s_n^{-2}\sum_{i=0}^{n-1}\E(T_{i+1}^2\mathbf{1}_{T_{i+1}\ge \ep s_n}) \rightarrow 0$ as $m\rightarrow \infty$. 
\end{itemize}

Then $\frac{\sum_{i=1}^n T_{i}}{s_n } \rightarrow N(0,1)_\R$.
\end{theorem}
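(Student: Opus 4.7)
The plan is to prove this by the method of characteristic functions. By L\'evy's continuity theorem, it suffices to show that for each fixed $t \in \R$ one has $\phi_n(t) := \E \exp(it S_n/s_n) \to e^{-t^2/2}$, where $S_n := T_1 + \cdots + T_n$. The main device will be the approximate exponential martingale
\[
M_k := \exp\left(\frac{it}{s_n} S_k + \frac{t^2}{2 s_n^2} V_k^2\right), \qquad V_k^2 := \sum_{i \le k} \E[T_i^2 \mid \CE_{i-1}],
\]
so that $M_0 = 1$ and $V_n^2 = v_n^2$. The goal is to establish $\E M_{n \wedge \tau} \to 1$ for a suitable truncation $\tau$, and then recover $\phi_n(t)$ via the identity $\exp(it S_n/s_n) = M_n \exp(-\frac{t^2}{2} v_n^2/s_n^2)$ combined with $v_n^2/s_n^2 \to 1$.

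The first step is to apply the Taylor expansion $e^{ix} = 1 + ix - x^2/2 + r(x)$, with $|r(x)| \le C \min(x^2, |x|^3)$, at $x = t T_k/s_n$ and take the conditional expectation with respect to $\CE_{k-1}$. The linear term vanishes by the martingale difference property, yielding
\[
\E[\exp(it T_k/s_n) \mid \CE_{k-1}] = 1 - \frac{t^2}{2 s_n^2} \E[T_k^2 \mid \CE_{k-1}] + \rho_k.
\]
Splitting $|r(t T_k/s_n)|$ according to whether $|T_k| \le \eps s_n$ or not, the Lindeberg hypothesis gives $\sum_{k=1}^n \E|\rho_k| \le C |t|^3 \eps + o_n(1)$, which is arbitrarily small after sending $n \to \infty$ and then $\eps \to 0$. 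A similar split applied to $\eta_k := \E[T_k^2 \mid \CE_{k-1}]/s_n^2$ (bounded by $\eps^2$ on the small-$T_k$ part, plus a tail made negligible by Markov on the sum) shows $\max_k \eta_k \to 0$ in probability.

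The second step combines this with the Taylor expansion $\exp(\frac{t^2}{2} \eta_k) = 1 + \frac{t^2}{2} \eta_k + O(\eta_k^2)$ to derive the ratio identity $\E[M_k \mid \CE_{k-1}] = M_{k-1}(1 + \delta_k)$ with $\sum_k \E|\delta_k| \to 0$: the $\rho_k$ contribution is handled above, while the quadratic one uses $\sum_k \eta_k^2 \le (\max_k \eta_k)(\sum_k \eta_k) \to 0$ in probability. To convert this approximate martingale property into an actual limit, introduce the stopping time $\tau := \min\{k : V_k^2 > 2 s_n^2\}$ (with $\tau = \infty$ otherwise), so that $|M_{k \wedge \tau}| \le \exp(t^2)$; by hypothesis $\P(\tau \le n) \to 0$. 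Telescoping
\[
\E M_{(k+1) \wedge \tau} - \E M_{k \wedge \tau} = \E[M_{k \wedge \tau}\, \delta_{k+1}\, \mathbf{1}_{k < \tau}]
\]
then yields $|\E M_{n \wedge \tau} - 1| \le \exp(t^2) \sum_k \E|\delta_k| \to 0$.

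Finally, write $\phi_n(t) = \E[M_n \exp(-\frac{t^2}{2} v_n^2/s_n^2)]$ and split on $\{\tau > n\}$. There, $M_n = M_{n \wedge \tau}$ with $|M_{n \wedge \tau}| \le e^{t^2}$, and $\exp(-\frac{t^2}{2} v_n^2/s_n^2) \to e^{-t^2/2}$ in probability by hypothesis, so combining $\E M_{n \wedge \tau} \to 1$ with dominated convergence yields the contribution $e^{-t^2/2}$ in the limit; on the complement, $|\exp(it S_n/s_n)| = 1$ and $\P(\tau \le n) = o(1)$ make the contribution negligible. The main obstacle is keeping the three interacting approximations consistent --- the conditional Taylor expansion of each factor, the concentration $v_n^2/s_n^2 \to 1$, and the truncation preventing $|M_k|$ from blowing up --- and in particular applying the Lindeberg condition in its two roles (controlling $\sum \E|\rho_k|$ and forcing $\max_k \eta_k \to 0$), without both of which the quadratic correction in the ratio identity would fail to vanish in the accumulated product.
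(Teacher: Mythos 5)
The paper does not actually prove this statement: the ``proof'' consists entirely of the citation to Brown's 1971 theorem (\cite{Br}). Your characteristic-function argument with the exponential martingale $M_k = \exp(\tfrac{it}{s_n}S_k + \tfrac{t^2}{2s_n^2}V_k^2)$, truncation by a stopping time, and the two Lindeberg-driven estimates ($\sum_k\E|\rho_k|\to0$ and $\max_k\eta_k\to0$ in probability) is the standard route, and it is essentially the same approach Brown uses. So strategically you have reconstructed the right proof.

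There is, however, a genuine technical gap in the truncation. With $\tau := \min\{k : V_k^2 > 2s_n^2\}$, the quantity $V_\tau^2$ can overshoot $2s_n^2$ by $\E[T_\tau^2 \mid \CE_{\tau-1}] = s_n^2\eta_\tau$, which you have no a priori bound on. Consequently the assertion $|M_{k\wedge\tau}|\le e^{t^2}$ is false when $k\ge\tau$, and this bites in two places: first, the claim $\sum_k\E|\delta_k|\to0$ cannot be extracted from the Taylor remainder $O(\eta_k^2)$ of $\exp(\tfrac{t^2}{2}\eta_k)$, since that $O(\cdot)$ is uniform only for bounded $\eta_k$ and moreover convergence in probability of $\sum_k\eta_k^2$ does not by itself give convergence of the expectations; second, in the last step you implicitly need $\E[M_{n\wedge\tau}\mathbf{1}_{\tau\le n}]\to0$, i.e.\ $\E[e^{\frac{t^2}{2}V_\tau^2/s_n^2}\mathbf{1}_{\tau\le n}]\to0$, which again fails without control of $\eta_\tau$ (one cannot conclude from $\P(\tau\le n)\to0$ alone, because $e^{\frac{t^2}{2}\eta_\tau}$ is unbounded). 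The standard repair is to observe that $V_{k+1}^2$ is $\CE_k$-measurable, so $\tau' := \min\{k\ge0 : V_{k+1}^2 > 2s_n^2\}$ is a legitimate stopping time. Using $\tau'$ instead of $\tau$, one gets $V_{\tau'}^2\le2s_n^2$ (no overshoot, hence $|M_{n\wedge\tau'}|\le e^{t^2}$ unconditionally) and also $\eta_{k+1}\le2$ on $\{k<\tau'\}$, which makes the Taylor $O(\eta_{k+1}^2)$ bound valid on exactly the event where you need it; the estimate $\E\sum_{k<\tau'}\eta_{k+1}^2 \le 4\,\P(\max_k\eta_k>\eps) + 2\eps \to 0$ then completes the argument. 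With that modification your proof is correct.
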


\begin{proof} See \cite[Theorem 1]{Br}.
\end{proof}

We apply this theorem with $T_j := \frac{1}{\sqrt{j}} h_j$.  From \eqref{hjcond} one has
$\E(T_{j+1} |\CE_j )=0$ and $\E(T_{j+1}^2 |\CE_j )=\frac{1}{j}$, and hence also $\E(T_{j+1}^2) = \frac{1}{j}$.  Thus $v_n=s_n = \log^{1/2} n + O(1)$; this gives the first hypothesis in Theorem \ref{theorem:martingale}.

Now we verify the Lindeberg condition.  From \eqref{xj-moment} we have $\E(T_{i+1}^4|\CE_i) = \frac{1}{(i+1)^2} \E(h_{i+1}^4|\CE_i) \ll \frac{1}{i^2}$ and hence
$$ \E(T_{i+1}^2\mathbf{1}_{T_{i+1}\ge \ep s_n})  \leq \ep^{-2} s_n^{-2} \E T_{i+1}^4 \ll \frac{\ep^{-2}}{i^2 s_n};$$
since $s_n = \log^{1/2} n + O(1)$, the claim follows.  This concludes the proof of Proposition \ref{fn-clt}.

Proposition \ref{fn-clt} controls the magnitude of the vector $\begin{pmatrix} E_{2j} \\ E_{2j-1} \end{pmatrix}$ when $j=n/2$.  We will however be interested in the distribution of $E_n$, and so we must also obtain information about the \emph{phase} of this vector also.  To this end, we express this vector in polar coordinates as
\begin{equation}\label{polar}
\begin{pmatrix} E_{2j} \\ E_{2j-1} \end{pmatrix} = (-1)^j F_j^{1/2} \begin{pmatrix} \cos \theta_j \\ \sin \theta_j \end{pmatrix}
\end{equation}
for some $\theta_j \in \R/2\pi \Z$, where we introduce the sign $(-1)^j$ to cancel the $-1$ factor in \eqref{ejp}.  

\begin{proposition}[Uniform distribution of $\theta_n$]  One has
$$ \theta_n \rightarrow {\bf u}$$
as $n \to \infty$, where ${\bf u}$ is the uniform distribution on $\R/2\pi\Z$.
\end{proposition}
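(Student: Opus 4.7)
The plan is to verify that $\E e^{ik\theta_n} \to 0$ for every non-zero integer $k$; by Weyl's equidistribution criterion this forces convergence of the law of $\theta_n$ on $\R/2\pi\Z$ to the uniform distribution $\mathbf{u}$. First I would use \eqref{ejp} and \eqref{polar} to derive a recursion for the unit vector $v_j := (\cos\theta_j, \sin\theta_j)^T$. Dividing \eqref{ejp} by $(-1)^j F_j^{1/2}$ and using \eqref{crude} to absorb the remainder into a manageable relative error, one finds that $v_j$ is the normalisation of $(I + B_j) v_{j-1}$, up to an error of order $O(Y_j^{O(1)} j^{-3/2})$, where
$$B_j := -\tfrac{1}{\sqrt{2j}} G_j - \tfrac{1}{4j} I - \tfrac{1}{j} R_j.$$
Taking the argument and expanding $\arctan$ yields
$$\Delta\theta_j := \theta_j - \theta_{j-1} = v_{j-1}^\perp B_j v_{j-1} - (v_{j-1}\cdot B_j v_{j-1})(v_{j-1}^\perp B_j v_{j-1}) + O(\|B_j\|^3) + O(Y_j^{O(1)} j^{-3/2}),$$
with the final error controlled via truncation on $Y_j$ using \eqref{tailbound} and \eqref{xj-moment}, exactly as in the proof of Proposition \ref{fn-clt}.

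The heart of the argument is computing the first three conditional moments of $\Delta\theta_j$ given $\CE_{j-1}$. Using the explicit form \eqref{gj-def} of $G_j$ and the independence of $a_{2j-1}, a_{2j}, c_{2j-2}, c_{2j-1}$, a direct calculation shows that for any unit vector $v = v_{j-1}$ (determined by $\CE_{j-1}$),
$$\E\bigl[v^\perp G_j v \mid \CE_{j-1}\bigr] = 0, \qquad \E\bigl[(v^\perp G_j v)^2 \mid \CE_{j-1}\bigr] = 1,$$
the second identity being $\theta_{j-1}$-independent via the same $\cos^2+\sin^2=1$ manipulation used to derive \eqref{hjcond}. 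The subtler but crucial identity is the cancellation
$$\E\bigl[(v \cdot G_j v)(v^\perp G_j v) \mid \CE_{j-1}\bigr] = 0,$$
which follows by expanding the product in the monomials $\{a_{2j-1}^2, a_{2j}^2, c_{2j-1}^2, c_{2j-2}^2\}$ (all cross terms vanish by independence) and observing that the surviving trigonometric coefficients $-\cos^3\theta\sin\theta + \sin^3\theta\cos\theta + \sin\theta\cos^3\theta - \sin^3\theta\cos\theta$ telescope to zero. Combined with the obvious facts that the $-\tfrac{1}{4j} I$ term in $B_j$ contributes nothing to the angle and that $R_j$ is conditionally centred, this yields
$$\E(\Delta\theta_j \mid \CE_{j-1}) = O(j^{-3/2}), \quad \E(\Delta\theta_j^2 \mid \CE_{j-1}) = \tfrac{1}{2j} + O(j^{-3/2}), \quad \E(|\Delta\theta_j|^3 \mid \CE_{j-1}) = O(j^{-3/2}).$$

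Taylor-expanding $e^{ik\Delta\theta_j}$ to third order and taking conditional expectation then produces
$$\E\bigl(e^{ik\Delta\theta_j} \bigm| \CE_{j-1}\bigr) = 1 - \tfrac{k^2}{4j} + O_k(j^{-3/2}).$$
Writing $Z_j := e^{ik\theta_j}$, the tower property combined with $|Z_{j-1}| = 1$ gives the recursion
$$|\E Z_j| \le \Bigl(1 - \tfrac{k^2}{4j}\Bigr) |\E Z_{j-1}| + O_k(j^{-3/2}).$$
Iterating from $j = m+1$ up to $j = n/2$, with $m = m(n)$ a slowly growing function as in the proof of Proposition \ref{fn-clt}, gives
$$|\E Z_{n/2}| \le \prod_{j=m+1}^{n/2}\Bigl(1 - \tfrac{k^2}{4j}\Bigr) + O_k(m^{-1/2}) = O_k\bigl((m/n)^{k^2/4}\bigr) + O_k(m^{-1/2}) = o(1),$$
which is the desired Fourier decay for each non-zero integer $k$.

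The main obstacle is the cancellation identity $\E[(v\cdot G_j v)(v^\perp G_j v)\mid \CE_{j-1}]=0$: without it, the quadratic term in the angular expansion would contribute a conditional mean of size $\Theta(j^{-1})$, and the iteration would then produce only $O(1)$ bounds on $\E Z_{n/2}$ for $|k|=1$ (since $\sum_j j^{-1} (j/n)^{1/4}$ is of order $1$). The identity itself is a short explicit computation, but spotting and exploiting it is what makes the whole strategy go through. The remaining bookkeeping --- truncation of the $Y_j$'s, handling the $R_j$ contributions, and the higher-order terms in the $\arctan$ expansion --- follows the same template already used in the proof of Proposition \ref{fn-clt}.
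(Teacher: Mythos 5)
Your proposal is correct and follows essentially the same route as the paper: Weyl's criterion, the polar recursion from \eqref{ejp}, the $\arctan$ expansion of the angle increment, conditional moment computations for the perpendicular and parallel components of the perturbation, and iteration of the resulting recursion for $\E e^{ik\theta_j}$. The cancellation identity $\E[(v\cdot G_j v)(v^\perp G_j v)\mid \CE_{j-1}]=0$ that you isolate is exactly the fact the paper invokes (without writing it out) when it asserts that $e_j f_j$ has conditional mean $O(j^{-1.47})$, so you have simply made an implicit step explicit.
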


\begin{proof} By the Weyl equidistribution criterion, it suffices to show that
$$ \E e^{i k \theta_n} = o(1)$$
for every fixed non-zero integer $k$.

Fix $k$. Inserting the polar representation \eqref{polar} into \eqref{ejp}, we obtain the recursion
\begin{equation}\label{ejp-2}
\begin{pmatrix} \cos \theta_j \\ \sin \theta_j \end{pmatrix}
= C_j ( 1 + D_j )
\begin{pmatrix} \cos \theta_{j-1} \\ \sin \theta_{j-1} \end{pmatrix} 
\end{equation}
where $D_j$ is the matrix
\begin{equation}\label{rjf}
D_j := 
- \frac{1}{\sqrt{2j}} G_j + \frac{1}{j} R''_j + O( \frac{Y_j^{O(1)}}{j^{3/2}} ),
\end{equation}
where $R''_j$ is a matrix obeying the same properties as $R_j$ or $R'_j$, and $C_j$ is a non-zero scalar whose exact value is not important for us.  

We extract the components $e_j, f_j$ of $D_j$ in 
  in the orthonormal basis formed by  the two vectors
$(\cos \theta_{j-1}, \sin \theta_{j-1})$ and $(-\sin \theta_{j-1}, \cos
\theta_{j-1}) $, thus
$$ e_j := \begin{pmatrix} -\sin \theta_{j-1}, & \cos \theta_{j-1} \end{pmatrix} 
D_j \begin{pmatrix} \cos \theta_{j-1} \\ \sin \theta_{j-1} \end{pmatrix} $$
and
$$ f_j := \begin{pmatrix} \cos \theta_{j-1}, & \sin \theta_{j-1} \end{pmatrix} 
D_j \begin{pmatrix} \cos \theta_{j-1} \\ \sin \theta_{j-1} \end{pmatrix}.
$$

From \eqref{ejp-2} we thus have
$$\frac{1}{C_j} (\cos \theta_j, \sin \theta_j) = (1+f_j) (\cos \theta_{j-1},
\sin \theta_{j-1}) + e_j (-\sin \theta_{j-1}, \cos  \theta_{j-1}), $$
\noindent and so we have a right-angled triangle with base $ 1+f_j$,  height $e_j$,
and angle $\theta_j- \theta_{j-1}$.  Elementary trigonometry then gives 
$$ \tan(\theta_j-\theta_{j-1}) = \frac{e_j}{1+f_j}.$$

By \eqref{xj-moment}, we see that with probability $1-O(j^{-100})$ (say), we have $e_j, f_j = O(j^{-0.49})$ (say). Using the  Taylor expansion 
of $\operatorname{arc} \tan$  and $1/(1+x)$ we obtain 

$$ \theta_j - \theta_{j-1} = e_j - e_j f_j + O( j^{-1.47} )$$
and thus
$$ e^{ik\theta_j} = e^{ik\theta_{j-1}} ( 1 + ik e_j - ik e_j f_j - \frac{k^2}{2} e_j^2 + O( j^{-1.47} ) )$$
with probability $1-O(j^{-100})$.  Hence
$$ \E e^{ik\theta_j} = \E e^{ik\theta_{j-1}} ( 1 + ik e_j - ik e_j f_j - \frac{k^2}{2} e_j^2 ) + O( j^{-1.47} ).$$
Now from \eqref{rjf}, \eqref{gj-def}, \eqref{xj-moment} we see that after conditioning  on $\theta_{j-1}$,
 $e_j$ has mean $O(j^{-1.47})$ and variance $\frac{1}{2j} + O(j^{-1.47})$, and that $e_j f_j$ has mean $O(j^{-1.47})$.  We conclude that
$$\E e^{ik\theta_j} = \E e^{ik\theta_{j-1}} (1 - \frac{k^2}{4j}) + O(j^{-1.47})$$
for any $1 \leq j \leq n$.
Telescoping this, we see that
$$|\E e^{ik\theta_n}| \ll (\frac{m}{n})^{k^2/4} |\E e^{ik\theta_m}| + O( m^{-0.47})$$
for any $1 \leq m \leq n$.  Bounding $|\E e^{ik\theta_m}|$ by $1$ and choosing $m$ to be a slowly growing function of $n$, we obtain the claim.
\end{proof}

From the above proposition, we see in particular that
$$\frac{1}{\log n} \leq |\cos \theta_{n/2}| \leq 1$$ 
(say) with probability $1-o(1)$.  Since $E_n = (-1)^{n/2} F_{n/2}^{1/2} \cos \theta_{n/2}$, we thus have
$$\log |E_n| = \frac{1}{2} \log F_{n/2} + O( \log \log n )$$ 
with probability $1-o(1)$; combining this with Proposition \ref{fn-clt}, we see that
$$ \frac{\log|E_n|^2 + \frac{1}{2} \log n}{\sqrt{2 \log n}} \to N(0,1)_\R.$$
The claim \eqref{en-law} then follows.

\subsection{The GOE case}

We now discuss the changes to the above argument needed to address the GOE case.  The analogue of Proposition \ref{trotter} is easily established, but with the changes that the $a_j$ now have the distribution of $N(0,2)_\R$ instead of $N(0,1)_\R$, and the $b_j$ now have a real $\chi$-distribution instead of a complex one (thus the $z_{i,j}$ are now distributed according to $N(0,1)_\R$ instead of $N(0,1)_\C$).  The effect of this is to make the random variables $a_j,c_j$ in the above analysis have variance $2$ instead of $1$ (but they still have mean zero).  As a consequence $G^* G$ now has mean $4$ rather than mean $2$, which means that the $\frac{1}{2j}$ term in \eqref{multilog} becomes $\frac{3}{2j}$.  On the other hand, the random variables $h_j$ now have variance $2$ instead of $1$.  These two changes cancel each other out to some extent, and in particular the assertion \eqref{ass} remains unchanged.  Finally, when applying the martingale central limit theorem, the variances $v_n^2, s_n^2$ are now $2 \log n + O(1)$ rather than $\log n+O(1)$, again thanks to the increased variance of $h_j$.  The remainder of the argument goes through with the obvious changes.

\section{Resolvent swapping: a deterministic analysis} \label{section:swapping} 

In this section we study the stability of Hermitian matrices with respect to perturbation in just one or two entries.  To formalise this we will need some definitions.

We will need a number of matrix norms.  Let $A = (a_{ij})_{1 \leq i,j \leq n}$ be a matrix, and let $1 \leq p,q \leq \infty$ be exponents.  We use $\|A\|_{(q,p)}$ to denote the $\ell^p \to \ell^q$ operator norm, i.e. the best constant in the inequality
$$ \|Ax\|_{\ell^q} \leq \|A\|_{(q,p)} \|x\|_{\ell^p}$$
Thus for instance $\|A\|_{(2,2)}$ is the usual operator norm.  We also observe the identities
$$\|A\|_{(\infty,1)} = \sup_{1 \leq i,j \leq n} |a_{ij}|$$
and
$$ \|A\|_{(\infty,2)} := \sup_{1 \leq i \leq n} (\sum_{j=1}^n |a_{ij}|^2)^{1/2}.$$
In particular one has the identity
\begin{equation}\label{tts}
\|A\|_{(\infty,2)} = \|A A^*\|_{(\infty,1)}^{1/2}.
\end{equation}
By duality one has
\begin{equation}\label{dual}
\|A\|_{(q,p)} = \|A^*\|_{(p',q')},
\end{equation} where $1/p +1/p'= 1/q +1/q'=1$.

We observe the trivial inequality
\begin{equation}\label{op-dec}
\| AB \|_{(r,p)} \leq \|A\|_{(r,q)} \|B\|_{(q,p)}
\end{equation}
for any $A,B$ and $1 \leq p,q,r \leq \infty$.

Next, we need the notion of an elementary matrix.

\begin{definition}[Elementary matrix]  An \emph{elementary matrix} is a matrix which has one of the following forms
\begin{equation}\label{vform}
 V = e_a e_a^*, e_a e_b^* + e_b e_a^*, \sqrt{-1} e_a e_b^* - \sqrt{-1} e_b e_a^*
\end{equation}
with $1 \leq a,b \leq n$ distinct, where $e_1,\ldots,e_n$ is the standard basis of $\C^n$.
\end{definition}

Observe that
\begin{equation}\label{voi}
 \| V\|_{(q,p)} \ll 1
\end{equation}
and
\begin{equation}\label{votr}
|\tr(AV)| = |\tr(VA)| =O( \|A\|_{(q,p)})
\end{equation}
for all $1 \leq p,q \leq \infty$ and all $n \times n$ matrices $A$.  

Let $M_0$ be a Hermitian matrix, let $z=E+i\eta$ be a complex number, and let $V$ be an elementary matrix.  We then introduce, for each $t \in \R$, the Hermitian matrices
$$ M_t := M_0 + \frac{1}{\sqrt{n}} tV,$$
the resolvent
\begin{equation}\label{resolve}
R_t = R_t(E+i\eta) := (M_t - E - i\eta)^{-1}
\end{equation}
and the Stieltjes transform
$$ s_t := s_t(E+i\eta) := \frac{1}{n} \tr R_t(E + i\eta)$$
and study how $R_t$ and $s_t$ depend on $t$.  

We have the fundamental \emph{resolvent identity}
$$ R_t = R_0 - \frac{t}{\sqrt{n}} R_0 V R_t$$
which upon iteration leads to 
\begin{equation}\label{rotkv}
 R_t = R_0 + \sum_{j=1}^k \left(-\frac{t}{\sqrt{n}}\right)^j (R_0 V)^j R_0 + \left(-\frac{t}{\sqrt{n}}\right)^{k+1} (R_0 V)^{k+1} R_t. 
\end{equation}

Under a mild hypothesis, we also have the infinite limit of \eqref{rotkv}:

\begin{lemma}[Neumann series]\label{neum}  Let $M_0$ be a Hermitian $n \times n$ matrix, let $E \in \R$, $\eta > 0$, and $t \in \R$, and let $V$ be an elementary matrix. Suppose one has
\begin{equation}\label{tro}
 |t| \| R_0\|_{(\infty,1)} = o(\sqrt{n}).
\end{equation}
Then one has the Neumann series formula
\begin{equation}\label{neumann}
 R_t = R_0 + \sum_{j=1}^\infty \left(-\frac{t}{\sqrt{n}}\right)^j (R_0 V)^j R_0 
\end{equation}
with the right-hand side being absolutely convergent, where $R_t$ is defined by \eqref{resolve}.  Furthermore, for any $1 \leq p \leq \infty$ one has
\begin{equation}\label{tp}
 \|R_t\|_{(\infty,p)} \leq (1+o(1)) \|R_0\|_{(\infty,p)}.
\end{equation}
\end{lemma}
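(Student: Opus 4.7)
My plan is to invert $I + \frac{t}{\sqrt n} R_0 V$ via a Neumann series, working in the $\ell^p \to \ell^\infty$ operator norm throughout. The crucial observation is that any elementary $V$ is simultaneously sparse and low-rank, so $\|R_0 V\|_{(\infty,\infty)}$ is controlled by $\|R_0\|_{(\infty,1)}$ regardless of how badly behaved $R_0$ is on non-sparse inputs. Reading off the $\ell^1 \to \ell^\infty$ norm of each of the forms in \eqref{vform} gives $\|V\|_{(1,\infty)} \ll 1$, and then the composition inequality \eqref{op-dec} applied with intermediate exponent $1$ yields the key bound $\|R_0 V\|_{(\infty,\infty)} \ll \|R_0\|_{(\infty,1)}$. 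Iterating this estimate, $\|(R_0 V)^j\|_{(\infty,\infty)} \leq (C\|R_0\|_{(\infty,1)})^j$ for some absolute constant $C$, and composing on the right with $R_0$ gives
$$ \left\| \left(\tfrac{t}{\sqrt n}\right)^{\!j} (R_0 V)^j R_0 \right\|_{(\infty,p)} \leq \left(\tfrac{C|t|\,\|R_0\|_{(\infty,1)}}{\sqrt n}\right)^{\!j} \|R_0\|_{(\infty,p)}. $$

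Under the hypothesis \eqref{tro} the common ratio here is $o(1)$, so the Neumann series on the right of \eqref{neumann} converges absolutely in the $(\infty,p)$ norm. Summing the geometric series gives the term-by-term bound $(1+o(1))\|R_0\|_{(\infty,p)}$, which delivers \eqref{tp} as soon as we know the series actually equals $R_t$.

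For this identification I would use the already-established finite iteration \eqref{rotkv} and let $k \to \infty$. The only thing to check is that the tail $(-t/\sqrt n)^{k+1}(R_0 V)^{k+1} R_t$ tends to zero: since $\eta > 0$ and $M_t$ is Hermitian, $R_t$ certainly exists and has finite $(\infty,p)$ norm, while the geometric factor $(C|t|\|R_0\|_{(\infty,1)}/\sqrt n)^{k+1}$ crushes it. Taking $k \to \infty$ in \eqref{rotkv} therefore yields \eqref{neumann}, and then the norm bound follows from the triangle inequality as above.

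The main obstacle, such as it is, is cementing the estimate $\|R_0 V\|_{(\infty,\infty)} \ll \|R_0\|_{(\infty,1)}$ without leaking an unwanted large factor; this is really the statement that the sparsity and low rank of $V$ conspire to beat the otherwise uncontrolled density of $R_0$. As a sanity check, $R_0 V$ has at most two nonzero columns (or one, when $V = e_a e_a^*$), each of which is a column of $R_0$, so the $(\infty,\infty)$ operator norm is bounded by twice the largest entry of $R_0$, which is precisely $2\|R_0\|_{(\infty,1)}$. Everything else in the argument is bookkeeping around the geometric series.
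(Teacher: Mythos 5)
Your proposal is correct and follows essentially the same route as the paper: both rest on the key estimate $\|(R_0V)^j\|_{(\infty,\infty)} \leq O(\|R_0\|_{(\infty,1)})^j$ obtained from \eqref{voi} and \eqref{op-dec}, pass to the limit $k \to \infty$ in the finite iteration \eqref{rotkv}, and then sum the geometric series to get \eqref{tp}. Your explicit observation that $R_0V$ has at most two nonzero columns is just a concrete instance of the paper's abstract bound $\|R_0V\|_{(\infty,\infty)} \leq \|R_0\|_{(\infty,1)}\|V\|_{(1,\infty)}$, and your handling of the tail term (using finiteness of $R_t$ for $\eta>0$) matches the paper's.
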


In practice, we will have $t = n^{O(c_0)}$ (from a decay hypothesis on the atom distribution) and $\|R_0\|_{(\infty,1)} = n^{O(c_0)}$ (from eigenvector delocalisation and a level repulsion hypothesis), where $c_0>0$ is a small constant, so \eqref{tro} is quite a mild condition.  We also remark that by replacing $M_0$ and $t$ with $M_0+tV$ and $-t$ respectively, one can swap the roles of $R_0$ and $R_t$ in the above lemma without difficulty.

\begin{proof}   We rewrite \eqref{rotkv} as
\begin{equation}\label{rotkv-2}
\left(1 - \left(-\frac{t}{\sqrt{n}}\right)^{k+1} (R_0 V)^{k+1}\right) R_t = R_0 + \sum_{j=1}^k \left(-\frac{t}{\sqrt{n}}\right)^j (R_0 V)^j R_0 
\end{equation}
for all $k \geq 0$.  Sending $k \to \infty$ we will be able to conclude \eqref{neumann} (in a conditionally convergent sense, at least) once we show that $\left(-\frac{t}{\sqrt{n}}\right)^{k+1} (R_0 V)^{k+1}$ converges to zero (in, say, $\|A\|_{(\infty,1)}$ norm) as $k \to \infty$.  But from \eqref{voi}, \eqref{op-dec} we have
$$ \left\| \left(-\frac{t}{\sqrt{n}}\right)^{k+1} (R_0 V)^{k+1} \right\|_{(\infty,1)} \leq O\left(\frac{|t|}{\sqrt{n}}\right)^{k+1} \| R_0\|_\infty^{k+1}.$$
From \eqref{tro}, this decays exponentially in $k$, and this gives \eqref{neumann} (and also demonstrates that the series is absolutely convergent).

Taking $(\infty,p)$ norms of \eqref{neumann}, one has
$$
\|R_t\|_{(\infty,p)} \leq \|R_0\|_{(\infty,p)} + \sum_{j=1}^\infty \left(\frac{|t|}{\sqrt{n}}\right)^j \| (R_0 V)^j R_0 \|_{(\infty,p)}.$$
But from \eqref{voi}, \eqref{op-dec} one has
$$ \| (R_0 V)^j R_0 \|_{(\infty,p)} \leq (\| R_0\|_{\infty,1} \|V\| _{1 ,\infty}\|)^j \|R_0\| _{\infty, p} =  O(\|R_0\|_{\infty,1})^j  \|R_0\|_{(\infty,p)}$$
and the claim \eqref{tp} follows from \eqref{tro}.
\end{proof}

We now can describe the dependence of $s_t$ on $t$:

\begin{proposition}[Taylor expansion of $s_t$]\label{taylor-prop}  Let the notation be as above, and suppose that \eqref{tro} holds.  Let $k \geq 0$ be fixed.   Then one has
\begin{equation}\label{expand}
 s_t = s_0 + \sum_{j=1}^k n^{-j/2} c_j t^j + O\left( n^{-(k+1)/2} |t|^{k+1} \|R_0\|_{(\infty,1)}^{k+1} \min( \|R_0\|_{(\infty,1)}, \frac{1}{n\eta} ) \right)
 \end{equation}
where the coefficients $c_j$ are independent of $t$ and obey the bounds
\begin{equation}\label{cj-bound}
c_j  =O\left(\|R_0\|_{(\infty,1)}^{j} \min \{ \|R_0\|_{(\infty,1)}, \frac{1}{n\eta} \}\right) .
\end{equation}
for all $1 \leq j \leq k$.
\end{proposition}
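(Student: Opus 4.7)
The plan is to apply the finite resolvent expansion \eqref{rotkv} directly, take normalized traces to extract the coefficients $c_j$, and then bound these coefficients by two separate arguments whose minimum yields the stated estimate \eqref{cj-bound}.

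Starting from \eqref{rotkv}, taking $\frac{1}{n}\tr$ of both sides, and reading off powers of $t$ gives
$$ s_t = s_0 + \sum_{j=1}^k n^{-j/2} t^j c_j + \left(-\frac{t}{\sqrt{n}}\right)^{k+1} \frac{1}{n} \tr\bigl((R_0 V)^{k+1} R_t\bigr),$$
where $c_j := \frac{(-1)^j}{n}\tr((R_0 V)^j R_0)$ is manifestly independent of $t$. So the proposition reduces to proving the bound \eqref{cj-bound} for the $c_j$, and verifying the analogous bound for the remainder term involving $R_t$.

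For the coefficients, the key observation is that $V$ has rank at most $2$, with all non-zero entries at indices $a,b$. Writing $V = e_a e_b^* + e_b e_a^*$ (the other two forms are handled identically) and expanding the trace, we find after cyclic permutation and collapsing of the geometric block $(e_b^* R_0 e_a)^{j-1} = (R_0)_{ba}^{j-1}$ that
$$\tr\bigl((R_0 V)^j R_0\bigr) \;=\; O(1) \cdot \|R_0\|_{(\infty,1)}^{j-1}\cdot \sup_{a',b'\in\{a,b\}} |(R_0^2)_{a'b'}|.$$
I would then bound $|(R_0^2)_{ab}|$ in two ways. The cheap bound $|(R_0^2)_{ab}| \le n\|R_0\|_{(\infty,1)}^2$ gives after dividing by $n$ the estimate $|c_j| \ll \|R_0\|_{(\infty,1)}^{j+1}$. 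The refined bound exploits the resolvent identity $\Im R_0 = \eta\, R_0 R_0^*$, which furnishes
$(R_0 R_0^*)_{aa} = \eta^{-1}(\Im R_0)_{aa} \le \|R_0\|_{(\infty,1)}/\eta.$
Cauchy--Schwarz then yields $|(R_0^2)_{ab}| = |e_a^* R_0\cdot R_0 e_b| \le \|R_0^* e_a\|_2\|R_0 e_b\|_2 \le \|R_0\|_{(\infty,1)}/\eta$, giving $|c_j| \ll \|R_0\|_{(\infty,1)}^j/(n\eta)$. Taking the minimum gives \eqref{cj-bound}.

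The remainder term $\frac{1}{n}\tr((R_0 V)^{k+1} R_t)$ is treated identically: the same rank-$2$ collapse gives a bound in terms of $\|R_0\|_{(\infty,1)}^{k}$ times $|(R_t R_0)_{a'b'}|$, and I would bound the latter both trivially (by $n\|R_t\|_{(\infty,1)} \|R_0\|_{(\infty,1)} \ll n\|R_0\|_{(\infty,1)}^2$ using \eqref{tp}) and spectrally (applying Cauchy--Schwarz together with $\|R_t\|_{(2,2)}, \|R_0\|_{(2,2)} \le 1/\eta$ and the identity $R_0 R_0^* = \eta^{-1}\Im R_0$, together with $\|R_t\|_{(\infty,1)} \ll \|R_0\|_{(\infty,1)}$). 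The minimum of these two estimates yields precisely the stated error term $n^{-(k+1)/2}|t|^{k+1}\|R_0\|_{(\infty,1)}^{k+1}\min\{\|R_0\|_{(\infty,1)}, 1/(n\eta)\}$.

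The main conceptual step is the spectral bound $|(R_0^2)_{ab}| \ll \|R_0\|_{(\infty,1)}/\eta$; everything else is bookkeeping once one recognises that the rank-$2$ structure of $V$ lets the trace $\tr((R_0 V)^j R_0)$ factor as a product of $R_0$ entries times a single $R_0^2$ entry. The only minor subtlety is that the remainder term involves $R_t$ rather than $R_0$, but \eqref{tp} together with the analogous identity $\Im R_t = \eta R_t R_t^*$ (which holds for any Hermitian $M_t$) handles this uniformly.
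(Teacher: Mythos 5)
Your proposal is correct and follows the same overall strategy as the paper: take normalized traces of the finite expansion \eqref{rotkv} to identify $c_j = \tfrac{(-1)^j}{n}\tr((R_0V)^jR_0)$ and $r_t^{(k)} = \tfrac{(-1)^{k+1}}{n}\tr((R_0V)^{k+1}R_t)$, then bound each of these by two competing estimates whose minimum gives \eqref{cj-bound}. The difference is presentational. The paper stays inside the $\|\cdot\|_{(q,p)}$ operator-norm calculus set up at the start of Section~3: it cycles the trace to $\tr(V(R_0V)^{j-1}R_0^2)$, uses \eqref{votr}, \eqref{op-dec}, \eqref{voi} to reduce everything to $\|R_0\|_{(\infty,1)}^{j-1}\|R_0^2\|_{(\infty,1)}$, and then bounds $\|R_0^2\|_{(\infty,1)}$ either crudely via $\|R_0\|_{(\infty,\infty)}\|R_0\|_{(\infty,1)}\le n\|R_0\|_{(\infty,1)}^2$ or via \eqref{dual}, \eqref{tts} and $R_0R_0^*=\tfrac{R_0-R_0^*}{2i\eta}$ to get $\|R_0\|_{(\infty,1)}/\eta$. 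You instead unwind the trace explicitly using the rank-$\le 2$ structure of $V$, reducing $\tr((R_0V)^jR_0)$ to a sum of $O(1)$ products of $j-1$ matrix entries of $R_0$ and a single entry of $R_0^2$ (all indexed by $a,b$), and then bound that entry by the same two estimates via direct Cauchy--Schwarz and the identity $\Im R_0 = \eta R_0R_0^*$. This is logically equivalent to the paper's norm chaining, just written at the level of entries rather than operator norms; the one place the norm machinery is genuinely used is the passage from $R_t$ to $R_0$ via \eqref{tp}, which you also invoke. One tiny imprecision: listing $\|R_t\|_{(2,2)}\le 1/\eta$ as an ingredient for the remainder bound is a red herring; what you actually use (and correctly state) is $(R_tR_t^*)_{a'a'}=\eta^{-1}(\Im R_t)_{a'a'}\le\|R_t\|_{(\infty,1)}/\eta\ll\|R_0\|_{(\infty,1)}/\eta$, since the bare $1/\eta^2$ from the operator norm would not produce the needed $\|R_0\|_{(\infty,1)}$ factor.
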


\begin{proof} If we take normalised traces of \eqref{rotkv}, we obtain
$$ s_t = s_0 + \sum_{j=1}^k n^{-j/2} c_j t^j + n^{-(k+1)/2} t^{k+1} r^{(k)}_t$$
where $c_j$ are the coefficients
$$
c_j := (-1)^j \frac{1}{n} \tr( (R_0 V)^j R_0 )$$
and $r^{(k)}_t$ is the error term
$$ r^{(k)}_t := (-1)^{k+1} \frac{1}{n} \tr( (R_0 V)^{k+1} R_t ).$$

To estimate these coefficients, we use the cyclic property of trace to rearrange
$$ \tr( (R_0 V)^j R_0 ) = \trace (V (R_0 V)^{j-1} R_0^2)$$
and thus by \eqref{votr}, \eqref{op-dec}, \eqref{voi} we have
\begin{align*}
 |c_j| &\ll \frac{1}{n} \|(R_0 V)^{j-1} R_0^2\|_{(\infty,1)} \\
 &\ll \frac{1}{n} \| R_0 V \|_{(\infty,\infty)}^{j-1} \|R_0^2\|_{(\infty,1)} \\
 &\ll \frac{1}{n} ( \| R_0 \|_{(\infty,1)} \| V \|_{(1,\infty)})^{j-1} \|R_0^2\|_{(\infty,1)} \\
 &\ll \frac{1}{n} \|R_0\|_{(\infty,1)}^{j-1} \|R_0^2\|_{(\infty,1)}).
\end{align*}
We can bound this in one of two ways.  Firstly, by \eqref{op-dec} we have the crude inequality
$$ \|R_0^2\|_{(\infty,1)} \leq \|R_0\|_{(\infty,\infty)} \|R_0\|_{(\infty,1)} \leq
n \|R_0\|_{(\infty,1)}^2$$
(coming from the bound $\|x\|_{\ell^1} \leq n \|x\|_{\ell^\infty}$), leading to the bound
$$ c_j = O(\|R_0\|_{(\infty,1)}^{j+1}).$$
Alternatively,  we can use \eqref{op-dec}, \eqref{dual}, \eqref{tts} to bound
\begin{align*}
\|R_0^2\|_{(\infty,1)} &\leq \|R_0\|_{(\infty,2)} \|R_0\|_{(2,1)} \\
&\leq \| R_0\|_{(\infty,2)} \|R_0^*\|_{(\infty,2)} \\
&\leq \| R_0 R^*_0\|_{(\infty,1)}^{1/2} \|R_0^* R_0 \|_{(\infty,1)}^{1/2}.
\end{align*}
But from the definition \eqref{resolve} of the resolvent, one has the identity
$$ R_0 R^*_0 = R^*_0 R_0 = \frac{R_0 - R_0^*}{2i\eta}$$
and thus from the triangle inequality and \eqref{dual}
$$ \|R_0^2\|_{(\infty,1)} \leq \frac{1}{\eta} \|R_0\|_{(\infty,1)}.$$
This gives
$$ c_j =O\left( \frac{1}{n\eta} \|R_0\|_{(\infty,1)}^{j}\right).$$
Combining the two bounds on $c_j$ we have \eqref{cj-bound}.
A similar argument can be used to bound $r^{(k)}_t$ (using \eqref{tp} to replace $R_t$ by $R_0$ at some stage of the argument), so that
\begin{equation}\label{rkt-bound}
r^{(k)}_t =O\left(\|R_0\|_{(\infty,1)}^{k+1} \min\{ \|R_0\|_{(\infty,1)}, \frac{1}{n\eta} \}\right).
\end{equation}
The claim \eqref{expand} follows.
\end{proof}

\section{Proof of Theorem \ref{four-moment}} \label{section:4moment}

In this section we prove Theorem \ref{four-moment}.  Let $M_n, M'_n$ be as in that theorem, with $c_0$ sufficiently small to be chosen later.  Call a statistic $S(M)$ that can depend on a matrix $M$ \emph{highly insensitive} if one has 
$$ |S(M_n) - S(M'_n)| =O(n^{-c})$$
for some fixed $c>0$.  Thus our task is to show that $\E G( \log |\det (M_n-\sqrt{n}z_0)| )$ is highly insensitive for all $z_0$ and all $G$ obeying \eqref{g-bound}.  By dividing $G$ by $n^{c_0}$ (and reducing the size of $c_0$ if necessary) we may improve \eqref{g-bound} to the estimates
\begin{equation}\label{jx}
|\frac{d^j}{dx^j} G(x)| \leq 1
\end{equation}
for all $x \in \R$ and $0 \leq j \leq 5$.

By truncating the atom distributions (and re-adjusting to keep them at mean zero and unit variance) and using Condition \condone, we may assume without loss of generality that we have the uniform upper bound
\begin{equation}\label{xio}
 |\xi| \ll n^{c_0}
 \end{equation}
on the atom distribution (see \cite[Chapter 2]{BSbook} or \cite[Appendix A]{NVdet} for more details on the truncation technique).

Set $W_n := \frac{1}{\sqrt{n}} M_n$ (and $W'_n := \frac{1}{\sqrt{n}} M'_n$).  Then
$$ \log |\det (M_n-\sqrt{n}z_0)| = \frac{1}{2} n \log n + \log|\det(W_n-z_0)|.$$
By translating $G$ by $\frac{1}{2} n \log n$ (which does not affect the bounds \eqref{jx}), it thus suffices to show that $\E G(\log|\det(W_n-z_0)|)$ is highly insensitive.

Write $z_0 = E + \sqrt{-1} \eta_0$.  By conjugation symmetry we may take $\eta_0 \geq 0$.  We first  dispose of the easy case when $\eta_0 \geq n^{100}$.    In this case we have
$$  \log |\det(W_n - z_0)| = n \log |z_0| +  \log|\det(1 - z_0^{-1} W_n)| = n \log |z_0| + O(n^{-50})$$
(say), thanks to \eqref{xio}.  The claim then follows easily in this case from \eqref{jx}.

We now  restrict to the main case $0 \leq \eta_0 < n^{100}$.  From the fundamental theorem of calculus one has
$$ \log |\lambda-z_0| = \log |\lambda - E-\sqrt{-1} n^{100}| - \Im \int_{\eta_0}^{n^{100}} \frac{d\eta}{\lambda - E-\sqrt{-1}\eta}$$
and hence
\begin{equation}\label{helo}
  \log|\det(W_n-z_0)| =  \log |\det(W_n - E-\sqrt{-1} n^{100})| - n \Im \int_{\eta_0}^{n^{100}} s(E+\sqrt{-1}\eta)\ d\eta
\end{equation}
where 
$$s(z) = s_{W_n}(z) = \frac{1}{n} \tr(W_n-z)^{-1}$$
is the Stieltjes transform of $W_n$.  

\noindent The previous analysis and \eqref{helo} then gives
$$\log|\det(W_n-z_0)| = n \log n^{100} + O(n^{-50}) - n \Im \int_{\eta_0}^{n^{100}} s(E+\sqrt{-1}\eta)\ d\eta$$
By translating (and reflecting) $G$ once more, it thus suffices to show that the quantity
$$ \E G\left( n \Im \int_{\eta_0}^{n^{100}} s(E+\sqrt{-1}\eta)\ d\eta \right)$$
is highly insensitive.

%We need variants of Proposition \ref{lsv} and Proposition \ref{lsl} in this setting (noting that the bound \eqref{xio} has been replaced by the upper bound $|\xi| \ll %n^{O(c_0)}$):

We next need the following proposition.   Let $\lambda_1(W_n) \geq \ldots \geq \lambda_n(W_n)$ denote the eigenvalues of $W_n$ (counting multiplicity), and let $u_1(W_n),\ldots,u_n(W_n)$ be an associated orthonormal basis of eigenvectors.

\begin{proposition}[Non-concentration]\label{lsv-lsl} With high probability, one has
$$ \min_{1 \leq i \leq n} |\lambda_i(W_n)-E| \geq n^{-1-c_0}$$
and with overwhelming probability one has
$$ N_I =O( n |I|) $$
whenever $I$ is an interval of length $|I| \geq n^{-1+Ac_0}$ for a sufficiently large constant $A>0$.  Also, with overwhelming probability one has
$$ \sup_{1 \leq i \leq n} \|u_i(W_n)\|_{\ell^\infty} \leq n^{- 1/2 + O(c_0)}$$
\end{proposition}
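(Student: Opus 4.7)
The three assertions are all standard in the random matrix literature for bulk energies $|E|\le 2-\delta$, so my plan is to assemble them from known estimates rather than prove them from scratch, being careful about which hypotheses (the three-point support condition, the $n^{c_0}$ truncation, the moment matching) feed into each step.

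For the second claim (the local semicircle law at scale $n^{-1+Ac_0}$) I would invoke the bulk local semicircle estimates established by Erd\H{o}s--Schlein--Yau, refined by Erd\H{o}s--Yau--Yin and also proved independently in the Tao--Vu line of papers (e.g.\ the version used in \cite{TVlocal1,TVlocal2}): once the entries have mean zero, unit variance, and are bounded by $n^{c_0}$, the eigenvalue counting function on any interval of length at least $n^{-1+Ac_0}$ in the bulk deviates from $n\int_I \rho_{\sc}$ by at most $o(n|I|)$ with overwhelming probability, giving $N_I = O(n|I|)$ as $\rho_{\sc}$ is bounded on $|E|\le 2-\delta$. The third claim (eigenvector delocalisation in $\ell^\infty$) is a standard corollary: starting from an eigenvector $u_i$ with eigenvalue $\lambda_i$ close to $E$, one applies the resolvent identity $u_i(k) = \langle e_k, u_i\rangle$ combined with a second-moment calculation for $\Im G_{kk}(E + i\eta)$ at $\eta = n^{-1+Ac_0}$, which is controlled by the local semicircle law; this is the delocalisation bound of Erd\H{o}s--Schlein--Yau and again appears in \cite{TVlocal1}. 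So both of these reduce to quoting the local semicircle law with its known constants.

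The first claim is the genuinely delicate part, and I expect it to be the main obstacle. It is a level-repulsion / least-singular-value statement for the shifted Hermitian matrix $W_n - E$: with high probability no eigenvalue lies within $n^{-1-c_0}$ of $E$. For $E=0$, this is precisely the least-singular-value bound for a Wigner matrix, for which the results of Nguyen \cite{nguyen} and Vershynin \cite{vershynin} give $\P(\sigma_n(M_n) \leq n^{-1/2-c_0}) = O(n^{-c})$ without any support hypothesis. For general $E$ in the bulk, the standard approach is to combine the Four Moment Theorem for eigenvalues \cite{TVlocal1,TVlocal2} with an explicit GUE level-repulsion estimate: for GUE one has $\P(|\lambda_i - E|\le n^{-1-c_0}) = O(n^{-c})$ via the explicit Gaudin--Mehta kernel, and the Four Moment Theorem transfers this comparison to matrices matching GUE (or more generally, to $M_n$ via a two-step swapping argument) provided the atom distributions are supported on at least three points. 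This is exactly the hypothesis appearing in the statement of Theorem \ref{four-moment}, so the plan is to cite the corresponding level-repulsion result from the Tao--Vu series.

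In executing this, the main technical obstacle is making sure the level-repulsion transfer is at the right scale: one needs repulsion at the microscopic scale $n^{-1-c_0}$ (not just at the mean spacing $n^{-1}$), and the Four Moment Theorem produces errors of polynomial size in $n^{c_0}$, so one has to choose $c_0$ sufficiently small so that the GUE level-repulsion probability dominates the comparison error. A similar smallness constraint on $c_0$ already appears in the local semicircle and delocalisation applications, so the natural approach is to fix a single sufficiently small $c_0$ and a sufficiently large $A$ simultaneously compatible with all three estimates. Modulo this bookkeeping, the proposition follows by citing the three bodies of work above.
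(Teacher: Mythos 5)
Your overall plan—quote the local semicircle law for the second claim, eigenvector delocalisation for the third, and attack the first claim via level repulsion for nice atom distributions combined with the Four Moment Theorem—is structurally the same as the paper's, which simply cites \cite[Proposition 66]{TVlocal1} and \cite[Proposition 62]{TVlocal1} for the second and third parts and then handles the first by citing level-repulsion estimates from \cite{maltsev} and \cite{ESY3} (valid for smooth atom distributions in the bulk $|E|\le 2-\delta$) and transferring to three-point-supported atoms via the Four Moment Theorem and a moment-matching argument as in the proof of \cite[Corollary 24]{TVlocal1}. Your suggestion to compare directly with GUE via the Gaudin--Mehta kernel is essentially a specific instance of this, so the two routes are morally identical.

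There is, however, one concrete error in your account of the $E=0$ case. You assert that ``the results of Nguyen \cite{nguyen} and Vershynin \cite{vershynin} give $\P(\sigma_n(M_n) \leq n^{-1/2-c_0}) = O(n^{-c})$ without any support hypothesis.'' The paper explicitly disagrees with the Nguyen half of this: it states that \cite{nguyen} ``only give a lower bound of $n^{-C}$ for some fixed $C$, which is not quite enough for our purposes''---i.e.\ the exponent $C$ there is a fixed constant not close to the sharp threshold $1/2$, so it does not yield the fine scale $n^{-1-c_0}$ for the normalised matrix $W_n$. Only Vershynin's result \cite{vershynin} attains the needed scale, and even then only for $E=0$ (and with no imaginary part). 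So if you wanted to avoid the three-point-support hypothesis at $E=0$ you should quote \cite{vershynin} alone, not \cite{nguyen}; and for $E\neq 0$ in the bulk the smoothed-distribution-plus-Four-Moment route you describe is required, exactly as the paper does it. Apart from this misattribution, your reduction of the proposition to the standard literature is correct and matches the paper's proof in spirit.
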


\begin{proof}   The second claim follows from \cite[Proposition 66]{TVlocal1} and the third claim follows from \cite[Proposition 62]{TVlocal1}, so we turn to the first claim.%  If $E=0$ and the off-diagonal entries of $W_n$ are real and identically distributed, this claim follows from  \cite[Theorem 1.1]{vershynin} (and one does not need the hypothesis that the real and imaginary parts of the atom distribution are supported on at least three points in this case).

 % For non-zero $E$ or when the off-diagonal entries are complex or not identically distributed we have to use other results in the literature. 
  
   The results in \cite{nguyen} only give a lower bound of $n^{-C}$ for some fixed $C$, which is not quite enough for our purposes.  On the other hand, if the atom distribution is sufficiently smooth, the claim follows from existing level repulsion estimates such as \cite{maltsev} or \cite{ESY3}, which are valid in the bulk region $|E| \leq 2-\delta$.  To extend to the case when the real and imaginary parts of the atom distribution are supported on at least three points, one can use the Four Moment Theorem (see \cite{TVlocal1}) and a moment matching argument (see e.g. the proof of \cite[Corollary 24]{TVlocal1}).  We remark that these are the only places in which we use the hypotheses that $|E| \leq 2-\delta$ and that the real and imaginary parts of the distribution are supported on at least three points.  It is likely that by improving the results in the above cited literature, one can remove these hypotheses\footnote{For instance, the results in \cite{vershynin} do not need the support hypothesis, but require the energy $E$ to be zero and the imaginary part to vanish.  It may however be possible to remove these hypotheses from the results in \cite{vershynin}, which could lead to an improvement of the proposition here.}.
\end{proof}

As a consequence of Proposition \ref{lsv-lsl}, we obtain a upper bound on the (imaginary part of the) Stieltjes transform:

\begin{corollary}\label{heft-cor}  For a sufficiently large constant $A_0 > 0$ (independent of $c_0$), one has
\begin{equation}\label{heft}
\Im s(E+\sqrt{-1} n^{-1-2A_0 c_0} ) \leq n^{-A_0 c_0}/2
\end{equation}
with high probability.
\end{corollary}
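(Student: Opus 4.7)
The plan is a direct dyadic decomposition of the spectral side of the Stieltjes transform, using both parts of Proposition \ref{lsv-lsl}. Write $\eta := n^{-1-2A_0 c_0}$ and $d_i := |\lambda_i(W_n) - E|$, so that
$$ \Im s(E + \sqrt{-1}\eta) = \frac{1}{n}\sum_{i=1}^{n} \frac{\eta}{d_i^2 + \eta^2}. $$
Condition on the high-probability event from Proposition \ref{lsv-lsl} on which $d_i \geq n^{-1-c_0}$ for every $i$, the counting estimate $N_I \ll n|I|$ holds for every interval $I$ with $|I| \geq n^{-1+Ac_0}$ (where $A$ is the constant from that proposition), and the eigenvalues are all bounded by, say, $3$. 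On this event, provided $A_0 > 1/2$, we have $\eta \leq d_i$ for all $i$, so $\frac{\eta}{d_i^2+\eta^2} \leq \frac{\eta}{d_i^2}$.

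Split the indices into \emph{close} eigenvalues with $d_i < n^{-1+Ac_0}$ and \emph{far} eigenvalues with $d_i \geq n^{-1+Ac_0}$. For the close ones, the counting estimate applied to the interval $[E-n^{-1+Ac_0},E+n^{-1+Ac_0}]$ gives at most $O(n^{Ac_0})$ such indices, and each contributes at most $\eta/(n^{-1-c_0})^2 = n^{1+(2-2A_0)c_0}$. Thus their contribution to $\Im s$ is
$$ \frac{1}{n} \cdot O(n^{Ac_0}) \cdot n^{1+(2-2A_0)c_0} = O\bigl( n^{(A+2-2A_0)c_0} \bigr). $$
For the far eigenvalues, decompose dyadically by setting $I_k := [2^k n^{-1+Ac_0}, 2^{k+1}n^{-1+Ac_0})$ for $k = 0,1,\dots,O(\log n)$. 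The counting estimate gives $N_{I_k} \ll 2^k n^{Ac_0}$, and each term is bounded by $\eta \cdot 2^{-2k} n^{2-2Ac_0}$, yielding a contribution of at most $2^{-k} n^{-(2A_0+A)c_0}$ after dividing by $n$. Summing the geometric series in $k$ gives $O(n^{-(2A_0+A)c_0})$, which is clearly negligible compared to $n^{-A_0c_0}/2$.

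Adding the two contributions, we obtain $\Im s(E+\sqrt{-1}\eta) = O(n^{(A+2-2A_0)c_0})$, and choosing $A_0$ large enough (e.g. $A_0 \geq A+3$, which is independent of $c_0$) we make the exponent at most $-A_0 c_0$ with room to spare, so the bound $\Im s \leq n^{-A_0 c_0}/2$ holds on the prescribed high-probability event. The main — though entirely routine — subtlety is making sure the counting estimate is only invoked on intervals of length at least $n^{-1+Ac_0}$, which is why we treat the close eigenvalues separately using only the gap bound and the crude count for the slightly larger interval to which the counting estimate does apply.
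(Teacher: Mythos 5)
Your argument is correct and is essentially the same as the paper's: both split the spectrum into the eigenvalues within $n^{-1+Ac_0}$ of $E$ (bounded via the count $O(n^{Ac_0})$ together with the level-repulsion gap $n^{-1-c_0}$, giving $O(n^{(A+2-2A_0)c_0})$) and the farther eigenvalues (bounded by the same dyadic decomposition, giving $O(n^{-(2A_0+A)c_0})$), then take $A_0$ large. Your write-up just makes the dyadic step and the final choice of $A_0$ more explicit.
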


\begin{proof}  The left-hand side of \eqref{heft} can be written as
$$ n^{-2-2A_0 c_0} \sum_{i=1}^n \frac{1}{n^{-2-4A_0 c_0} + (\lambda_i(W_n)-E)^2}.$$
By Proposition \ref{lsv-lsl}, we assume with high probability that there are at most $O(n^{Ac_0})$ eigenvalues $\lambda_i(W_n)$ that are within $n^{-1+Ac_0}$ of $E$, but that all such eigenvalues are at least $n^{-1-c_0}$ away from $E$.  The total contribution of these eigenvalues to the above expression is then at most $O( n^{(-2A_0+A+2)c_0} )$.  Similarly, by using Proposition \ref{lsv-lsl} and dyadic decomposition of the spectrum around $E$, we see that the contribution of the eigenvalues that are further than $n^{-1+Ac_0}$ away are $O( n^{(-2A_0 - A) c_0} )$ with overwhelming probability.  Combining these bounds we obtain the claim if $A_0$ is large enough.
\end{proof}

Let $\chi: \R \to \R$ be a smooth cutoff to the region $|x| \leq n^{-A_0 c_0}$ that equals $1$ for $|x| \leq n^{-A_0 c_0}/2$.  From the above corollary,  $\chi( \Im s(E+\sqrt{-1} n^{-1-2A_0 c_0} ) )$ is equal to $1$ with high probability.  Thus it suffices to show that
\begin{equation}\label{eton}
\E G\left( n \Im \int_{\eta_0}^{n^{100}} s(E+\sqrt{-1}\eta)\ d\eta \right) \chi\left( \Im s(E+\sqrt{-1} n^{-1-2A_0 c_0} ) \right)
\end{equation}
is highly insensitive.

We now view $M'_n$ as obtained from $M_n$ by $n^2$ swapping operations, each of which either replaces a diagonal entry of $M_n$ with the corresponding entry of $M'_n$, or replaces the real or imaginary part of an off-diagonal entry of $M_n$ (and its adjoint) with the corresponding entries of $M'_n$ (leaving the other part of that entry unchanged).  Of these $n^2$ swapping operations, $n$ of them will involve a diagoanl entry, and the other $n^2-n$ 
 will involve an off-diagonal entry.  
It will suffice to show that each swapping operation only affects \eqref{eton} by $O(n^{-2-c})$ in the off-diagonal case and $O(n^{-1-c})$ in the diagonal case for some fixed $c>0$.  In fact we will obtain a bound of 
the form $O(n^{-5/2+O(c_0)})$ (where the implied constant may depend on $A, A_0$) in the off-diagonal case and $O(n^{-3/2+O(c_0)})$ in the diagonal case, which suffices for $c_0$ small enough.

Let $M^{(0)}_n$, $M^{(1)}_n$ be two adjacent matrices in this swapping process, and let $W^{(0)}_n, W^{(1)}_n$ be the associated normalised matrices.  Then we can write
\begin{align*}
W^{(0)}_n &= W_0 + \frac{1}{\sqrt{n}} \xi^{(0)} V\\
W^{(1)}_n &= W_0 + \frac{1}{\sqrt{n}} \xi^{(1)} V
\end{align*}
where $V$ is an elementary matrix, $\xi^{(0)}, \xi^{(1)}$ are real random variables matching to fourth order and bounded in magnitude by $O(n^{O(c_0)})$, and $W_0$ is a random matrix idnependent of both $\xi^{(0)}$ and $\xi^{(1)}$.  We can then write \eqref{eton} for $W^{(0)}_n$ using the notation of the preceding section as
$$
\E G( n \Im \int_{\eta_0}^{n^{100}} s_{\xi^{(0)}}(E+\sqrt{-1}\eta)\ d\eta ) \chi( \Im s_{\xi^{(0)}}(E+\sqrt{-1} n^{-1-2A_0 c_0}) ) 
$$
and we wish to show that this expression only changes by $O(n^{-5/2+O(c_0)})$ when $\xi^{(0)}$ is replaced by $\xi^{(1)}$ in the off-diagonal case, or $O(n^{-3/2+O(c_0)})$ in the diagonal case.

We now bound the resolvent:

\begin{lemma}[Resolvent bound]  If $\chi( s_{\xi^{(0)}}(E+i n^{-1-2A_0c_0} ) )$ is non-vanishing, then with overwhelming probability 
$$ \sup_{\eta > 0} \| R_{\xi^{(0)}}(E+\sqrt{-1}\eta) \|_{(\infty,1)} \ll n^{O(c_0)}$$
and
\begin{equation}\label{condition}
 \sup_{\eta > 0} \| R_0(E+i\eta) \|_{(\infty,1)} \ll n^{O(c_0)}
\end{equation}
\end{lemma}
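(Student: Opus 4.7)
The plan is to first bound the resolvent $R_{\xi^{(0)}}$ via a spectral expansion, combining the deterministic consequence of the cutoff $\chi$ with the local level density and eigenvector delocalization from Proposition \ref{lsv-lsl}, and then to transfer the bound to $R_0$ via Lemma \ref{neum}.

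First I would unpack the hypothesis. Setting $\eta_* := n^{-1-2A_0 c_0}$, non-vanishing of $\chi(\Im s_{\xi^{(0)}}(E+\sqrt{-1}\eta_*))$ yields $\Im s_{\xi^{(0)}}(E+\sqrt{-1}\eta_*) \leq n^{-A_0 c_0}$, which is
\begin{equation*}
\sum_{k=1}^n \frac{\eta_*^2}{(\lambda_k(W^{(0)}_n)-E)^2 + \eta_*^2} \;=\; n\eta_* \cdot \Im s_{\xi^{(0)}}(E+\sqrt{-1}\eta_*) \;\leq\; n^{-3A_0 c_0}.
\end{equation*}
Each summand lies in $[0,1]$, and any eigenvalue within $\eta_*$ of $E$ would produce a summand exceeding $1/2$. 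Hence, once $A_0>0$ is fixed and $n$ is large, the hypothesis forces the \emph{deterministic} separation $\min_k |\lambda_k(W^{(0)}_n) - E| > \eta_*$.

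Next I would invoke Proposition \ref{lsv-lsl} applied to the Wigner matrix $W^{(0)}_n$: with overwhelming probability, $\sup_k \|u_k(W^{(0)}_n)\|_{\ell^\infty} \leq n^{-1/2+O(c_0)}$ and the local level density $N^{(0)}_I \ll n|I|$ holds for every interval with $|I| \geq n^{-1+Ac_0}$; one also has $\|W^{(0)}_n\|_{(2,2)} = O(n^{O(c_0)})$ from the truncation \eqref{xio}. The spectral expansion gives, for any $\eta > 0$,
\begin{equation*}
|(R_{\xi^{(0)}}(E+\sqrt{-1}\eta))_{ij}| \;\leq\; \sum_k \frac{\|u_k\|_{\ell^\infty}^2}{|\lambda_k - E - \sqrt{-1}\eta|} \;\leq\; n^{-1+O(c_0)} \sum_k \frac{1}{|\lambda_k - E|},
\end{equation*}
where I simply discard the $\eta^2$ term in the denominator. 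I would then split the last sum dyadically about $E$: the $O(n^{Ac_0})$ eigenvalues in $[E-n^{-1+Ac_0}, E+n^{-1+Ac_0}]$ each lie at distance at least $\eta_*$ from $E$ by the previous paragraph, contributing $O(n^{1+(A+2A_0)c_0})$; each dyadic shell $\{2^j n^{-1+Ac_0} \leq |\lambda_k - E| < 2^{j+1} n^{-1+Ac_0}\}$ contains $O(2^j n^{Ac_0})$ eigenvalues contributing $O(2^{-j}n^{1-Ac_0})$ apiece, for a per-shell total of $O(n)$; and only $n^{O(c_0)}$ shells are non-empty. Summing yields $\sum_k |\lambda_k - E|^{-1} \ll n^{1+O(c_0)}$ and therefore $\|R_{\xi^{(0)}}(E+\sqrt{-1}\eta)\|_{(\infty,1)} \ll n^{O(c_0)}$ uniformly in $\eta > 0$.

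For the second bound I would apply Lemma \ref{neum} with the roles of the two matrices interchanged (as explicitly permitted by the remark following that lemma), taking the new unperturbed matrix to be $W^{(0)}_n$ and the perturbation parameter to be $-\xi^{(0)}$, so that the new perturbed matrix is $W_0$. By \eqref{xio} we have $|\xi^{(0)}| = O(n^{O(c_0)})$, and combined with $\|R_{\xi^{(0)}}\|_{(\infty,1)} \ll n^{O(c_0)}$ the hypothesis \eqref{tro} is satisfied for $c_0$ small enough. The conclusion \eqref{tp} with $p=1$ then gives $\|R_0(E+\sqrt{-1}\eta)\|_{(\infty,1)} \leq (1+o(1))\|R_{\xi^{(0)}}(E+\sqrt{-1}\eta)\|_{(\infty,1)} \ll n^{O(c_0)}$, uniformly in $\eta > 0$. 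The main obstacle is obtaining control uniformly in $\eta > 0$, including for $\eta$ arbitrarily small where the crude bound $\|R\|_{(2,2)} \leq 1/\eta$ is useless; the resolution is that the cutoff $\chi$ supplies a hard deterministic gap between $E$ and the spectrum of $W^{(0)}_n$ which makes the $\eta$-dependence in the denominator inessential and reduces matters to the level-density sum.
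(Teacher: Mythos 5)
Your proof is correct and follows the same route as the paper: bound $\|R_{\xi^{(0)}}\|_{(\infty,1)}$ by spectral decomposition plus eigenvector delocalization and the local law from Proposition \ref{lsv-lsl}, then transfer to $R_0$ via Lemma \ref{neum} with the roles of $0$ and $\xi^{(0)}$ exchanged. Where the paper compresses the estimate of $\sum_i |\lambda_i-E|^{-1}$ into the phrase ``arguing as in Corollary \ref{heft-cor},'' you have supplied the right unpacking: the key point is that non-vanishing of the cutoff $\chi$ forces, \emph{deterministically}, the separation $\min_k|\lambda_k(W^{(0)}_n)-E| > n^{-1-2A_0 c_0}$, so one never needs the merely high-probability level-repulsion input (the first part of Proposition \ref{lsv-lsl}), and the ``overwhelming probability'' in the conclusion is justified using only the density and delocalization bounds. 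That observation is exactly what makes the probabilistic bookkeeping of the lemma hang together, and spelling it out is a genuine improvement in exposition over the paper's terse pointer.
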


\begin{proof}  From spectral decomposition one has
$$ \| R_\xi(E+\sqrt{-1}\eta) \|_{(\infty,1)}  \leq \sum_{i=1}^n \frac{\|u_j(W^{(0)}_n)\|_{\ell^\infty}^2}{|\lambda_i(W_n) - E - \sqrt{-1} \eta|}.$$

Applying the last statement\footnote{To be precise, we need to apply this statement for 
 $W^{(0)}_n$, but the proof for this matrix is the same as for $W_n$; see \cite{TVlocal1}.} in 
 Proposition \ref{lsv-lsl}, we conclude with overwhelming probability that
$$ \| R_\xi(E+\sqrt{-1}\eta) \|_{(\infty,1)}  \leq n^{-1+O(c_0)} \sum_{i=1}^n \frac{1}{|\lambda_i(W_n) - E|}.$$
Arguing as in Corollary \ref{heft-cor}, one see that if 
$\chi( s_\xi(E+\sqrt{-1} n^{-1-2A_0c_0} ) )$ is non-vanishing, then with overwhelming probability

$$ \sum_{i=1}^n \frac{1}{|\lambda_i(W_n) - E|} =O(n^{1+O(c_0)})$$
and the first claim follows.  The second claim then follows from Lemma \ref{neum} (swapping the roles of $0$ and $\xi^{(0)}$).
\end{proof}

We now condition to the event that \eqref{condition} holds.  To begin with, let us assume we are in the off-diagonal case.  Then by Proposition \eqref{taylor-prop} we have
$$ s_{\xi^{(i)}}(E+\sqrt{-1}\eta) = s_0(E+\sqrt{-1}\eta) + \sum_{j=1}^4 (\xi^{(i)})^j n^{-j/2} c_j(\eta) + O( n^{-5/2 + O(c_0)} )  \min \{1, \frac{1}{n\eta} \}$$
 for $i=0,1$,
where the coefficients $c_j$ enjoy the bounds
$$ c_j =O\left(n^{O(c_0)} \min\{ 1, \frac{1}{n\eta} \}\right).$$
From this and Taylor expansion above  we see that the expression
$$ G\left( n \Im \int_0^{n^{100}} s_{\xi^{(i)}}(E+\sqrt{-1}\eta)\ d\eta \right) \chi\left( \Im s_\xi(E+\sqrt{-1} n^{-1-2c_0} \right)$$
is equal to a polynomial of degree at most $4$ in $\xi^{(i)}$ with coefficients independent of $\xi^{(i)}$, plus an error of $O( n^{-5/2 + O(c_0)} )$. Taking the expectation and using the 
four moment assumption, we obtain that the difference between the expectations of 
$G$ with respect to $\xi^{(0)}$ and $ \xi^{(1)}$ is $O(n^{-5/2 +O(c_0)})$, as desired.  

In the diagonal case, one argues similarly, except that one only is assuming two matching moments, and so one should only Taylor expand to second order rather than fourth order.  This concludes the proof of Theorem \ref{four-moment}.

\appendix

\section{Moment calculations}\label{second}

In this appendix we establish Theorem \ref{secthm}.  Our main tool is the Leibniz expansion
$$ \det M_n = \sum_{\sigma \in S_n} I_\sigma,$$
where for each permutation $\sigma: \{1,\ldots,n\} \to \{1,\ldots,n\}$, $I_\sigma$ is the random variable
$$ I_\sigma := \sgn(\sigma) \prod_{i=1}^n \zeta_{i \sigma(i)}.$$

We begin with the first moment computation.  Clearly
$$ \E \det M_n = \sum_{\sigma \in S_n} \E I_\sigma.$$
Because all the $\zeta_{ij}$ have mean zero and are jointly independent on the upper triangular region $1 \leq i \leq j$, we see that $\E I_\sigma$ vanishes unless $\sigma$ consists entirely of $2$-cycles (i.e. is a perfect matching), in which case $\E I_\sigma = 1$.  Thus, $\E \det M_n$ is the number of perfect matchings on $\{1,\ldots,n\}$, which is easily seen to be zero when $n$ is odd and $\frac{n!}{(n/2)! 2^{n/2}}$ when $n$ is even.

Now we turn to the second moment computation for GOE, thus we seek the bounds
\begin{equation}\label{wig-2}
n^{3/2} n! \ll \E |\det M_n|^2 \ll n^{3/2} n!.
\end{equation}
We may of course assume that $n$ is large.

From the Leibniz expansion we have
\begin{equation}\label{wig}
\E |\det M_n|^2 = \sum_{\sigma,\rho \in S_n} \E I_\sigma \overline{I_\rho}.
\end{equation}
Actually, as GOE has real coefficients, we can omit the complex conjugate over the $I_\rho$ term.

Now we investigate the expressions $\E I_\sigma I_\rho$.  This expression can be estimated using the cycle decomposition of $\sigma$ and $\rho$.  It is not difficult to see that this expression will be zero unless the following conditions are satisfied:
\begin{itemize}
\item If $\gamma$ is a cycle in $\sigma$ of length other than two, then either $\gamma$ or its reversal $\gamma^{-1}$ is a cycle in $\rho$, and conversely.
\item The support of the $2$-cycles in $\sigma$ equals the support of the $2$-cycles in $\rho$.
\end{itemize}

Furthermore, if the above conditions are satisfied, then $\E I_\sigma \overline{I_\rho}$ is equal to $2^{C_1} 3^{c}$, where $C_1$ is the number of $1$-cycles of $\sigma$ (or of $\rho$), and $c$ is the number of $2$-cycles that are common to both $\sigma$ and $\rho$.  This comes from the fact that the diagonal entries of GOE have variance $2$, while the off-diagonal entries have a fourth moment of $3$.

Some elementary combinatorics shows that for a given permutation $\sigma$, the number of permutations $\rho$ obeying the above conditions is equal to
$$ \frac{2C_2!}{C_2! 2^{C_2}} \prod_{k \geq 3} 2^{C_k},$$
where $C_k$ is the number of $k$-cycles of $\sigma$. (To be more precise, we would have to write $C_k(\sigma)$, but as there is little chance of misunderstanding, we  prefer using just $C_k$ to simplify the presentation.)  Thus \eqref{wig} is thus lower bounded by
\begin{equation}\label{low}
 \sum_{\sigma \in S_n} 2^{C_1} \frac{2C_2!}{C_2! 2^{C_2}} \prod_{k \geq 3} 2^{C_k}.
\end{equation}
In the converse direction, for fixed $0 \leq c \leq C_2$, the number of $\rho$ obeying the above conditions and with exactly $c$ $2$-cycles in common is bounded by
$$ \binom{C_2}{c} \frac{2(C_2-c)!}{(C_2-c)! 2^{C_2-c}} \prod_{k \geq 3} 2^{C_k},$$
and so \eqref{wig} is upper bounded by
$$ \sum_{\sigma \in S_n} 2^{C_1} \sum_{c=0}^{C_2} 3^c \binom{C_2}{c} \frac{2(C_2-c)!}{(C_2-c)! 2^{C_2-c}} \prod_{k \geq 3} 2^{C_k}.$$
Let us first estimate the upper bound.  Observe from Stirling's formula that
$$ \frac{2(C_2-c)!}{(C_2-c)! 2^{C_2-c}} \ll (C_2-c)! 2^{C_2-c} / \sqrt{C_2-c+1}.$$
This and an elementary calculation show 
$$ \sum_{c=0}^{C_2} 3^c \binom{C_2}{c} \frac{2(C_2-c)!}{(C_2-c)! 2^{C_2-c}} \ll C_2! 2^{C_2} / \sqrt{C_2+1}.$$
So we may upper bound \eqref{wig} by
$$ O(\sum_{\sigma \in S_n} \frac{C_2! 2^{C_2}}{\sqrt{C_2+1}} \prod_{k \neq 2} 2^{C_k}).$$
Using the fact that $\sum_{m=1}^{n/2} \frac{1}{\sqrt m} = O (\sqrt n)$, we see 
that to prove the upper bound in \eqref{wig-2}, it  suffices to show that
\begin{equation}\label{c2m}
 \sum_{\sigma \in S_n: C_2 = m} m! 2^m \prod_{k \neq 2} 2^{C_k} \ll n \times n!
\end{equation}
for each $0 \leq m \leq n/2$. 

To establish \eqref{c2m}, we use a double counting argument\footnote{One could in fact obtain much more precise asymptotics on \eqref{c2m} using the method of generating functions, but we will not need to do so here.} 
 as follows.  For each permutation $\sigma$ with exactly $m$ $2$-cycles, we assign a quantity $F(\sigma)$ which is the product of 
 the number of ways to write down the $2$-cycles of $\sigma$ (counting ordering) and the number of ways to select some union $E$ 
 of  the $k$-cycles of $\sigma$ with $k \neq 2$.
 
 If the $2$-cycles of $\sigma$ are  $(x_1 y_1), \ldots ,(x_m y_m)$, then 
  there are $m! 2^m$ ways to write them down (counting all permutations in $S_m$ and the permutations between $x_j$ and $y_j$). 
  Furthermore, there are $\prod_{k \neq 2} 2^{C_k}$ ways to select $E$,  which is a $\sigma$-invariant set disjoint from the $x_1,\ldots,x_m,y_1,\ldots,y_m$.  This set $E$ has some cardinality $j$ between $0$ and $n-2m$. Therefore, 
 $$
 \sum_{\sigma \in S_n: C_2 = m} m! 2^m \prod_{k \neq 2} 2^{C_k} =  \sum_{\sigma \in S_n: C_2 = m}  F(\sigma) . $$

On the other hand, there are $\frac{n!}{(n-2m)!}$ ways to select  $2m$ 
ordered elements $x_1,\ldots,x_m,y_1,\ldots,y_m$ of $\{1,\ldots,n\}$.  For each $j$, there are then $\binom{n-2m}{j}$ ways to select $E$, and then to specify $\sigma$ on $E$ and on the complement of $E \cup \{x_1,\ldots,x_m,y_1,\ldots,y_m\}$ there are at most $j! (n-2m-j)!$ possibilities.  
(Notice that $\sigma$ restricted to $E$ is a permutation on $E$.) Putting all this together, we may bound the left-hand side of \eqref{c2m} by
$$ \sum_{j=0}^{n-2m} \frac{n!}{(n-2m)!} \binom{n-2m}{j} j! (n-2m-j)! = \sum_{j=0}^{n-2m} n!$$
and the claim follows.

Now we turn to the lower bound. From Stirling's formula we have
$$ \frac{2C_2!}{C_2! 2^{C_2}} \gg C_2! 2^{C_2} / \sqrt{C_2+1}$$
so by \eqref{low} (and crudely bounding $\sqrt{C_2+1}$ by $O(\sqrt{n})$ it suffices to show that
$$ \sum_{\sigma \in S_n} C_2! 2^{C_2} \prod_{k \neq 2} 2^{C_k} \gg n^2 \times n!.$$
For this, it suffices to prove the matching lower bound
\begin{equation}\label{c3m} \sum_{\sigma \in S_n: C_2 = m} m! 2^m \prod_{k \neq 2} 2^{C_k} \gg n \times n!
\end{equation}
to \eqref{c2m} for each $0 \leq m \leq n/4$ (say). 

We use the same double-counting argument as before.  We write the left-hand side of \eqref{c3m} as $\sum_{\sigma \in S_n: C_2=m} F(\sigma)$.  We use the  classical fact that as $n \to \infty$, the random variables $C_1,\ldots,C_k$ for any fixed $k$ converge jointly to independent Poisson variables of intensities $1/1, \ldots, 1/k$ respectively (see e.g. \cite{arratia}), so a positive constant fraction of $S_n$ is $2$-cycle free for $n \geq 0$. 
After fixing $x_1,\ldots,x_m,y_1,\ldots,y_m,E$, notice that  any $2$-cycle free permutation on $E$ and on its complement  will give a contribution to \eqref{c3m}. Thus, we obtain a lower bound of the form 
$$\gg \sum_{j=0}^{n-2m} \frac{n!}{(n-2m)!} \binom{n-2m}{j} j! (n-2m-j)! = n! (n-2m+1),$$
concluding the proof. 

Now we consider the second moment for the GUE case.  There are three differences here.  Firstly, the factor of $2^{C_1}$ that was present in the GOE analysis (which arose from the fact that the diagonal entries had variance $2$ instead of $1$) is now absent.  Secondly (and most importantly), in order for $\E I_\sigma \overline{I_\rho}$ to be non-vanishing, each cycle $\gamma$ of length at least three in $\sigma$ must appear in $\rho$ also; the appearance of the inverse cycle $\gamma^{-1}$ now leads to cancellation, in contrast with the GOE case.  As such, the $2^{C_k}$ factors for $k \geq 3$ are also absent.  Finally, the factor of $3^c$ in the above analysis becomes $2^c$, due to the smaller value of the fourth moment $\E |\zeta_{ij}|^4$ of the off-diagonal entries in the GUE case.  Repeating the GOE arguments, one reduces to showing that
$$
\sum_{\sigma \in S_n: C_2 = m} m! 2^m \ll n!
$$
for all $0 \leq m \leq n/2$, and
$$
\sum_{\sigma \in S_n: C_2 = m} m! 2^m \gg n!
$$
for all $0 \leq m \leq n/4$.  But this can be achieved by a routine modification of the above arguments (with the role of the additional set $E$, which represented the $\prod_{k \neq 2} 2^{C_k}$ factor, now omitted).

\begin{remark}  An inspection of the above argument shows that the hypotheses that $M_n$ are distributed according to GOE or GUE can be relaxed to the assertion that $M_n$ matches GOE or GUE to fourth order off the diagonal and to second order on the diagonal.
\end{remark}

\end{document}